\newcommand{\Z}{{\mathbb Z}} 
\newcommand{\Q}{{\mathbb Q}} 
\newcommand{\C}{{\mathbb C}} 
\newcommand{\R}{{\mathbb R}}
\newcommand{\cF}{{\mathcal F}}
\newcommand{\cH}{{\mathcal H}}
\newcommand{\Mod}[1]{\ (\mathrm{mod}\ #1)}
\newtheorem{thm}{Theorem}
\newtheorem{prop}{Proposition}
\newtheorem{cor}{Corollary}
\theoremstyle{remark}
\newtheorem{rem}[]{Remark}
\newcommand\abs[1]{\left|#1\right|}
\newcommand{\itop}[2]{\genfrac {}{}{0pt}{3}{#1}{#2} }
\begin{document}  
\author{ P. Guerzhoy}
\author{Ka Lun Wong}
\address{ 
Department of Mathematics,
University of Hawaii, 
2565 McCarthy Mall, 
Honolulu, HI,  96822-2273 
}
\email{pavel@math.hawaii.edu}

\address{ 
Faculty of Math and Computing,
Brigham Young University - Hawaii, 
55-220 Kulanui Street, 
Laie, HI,  96762-1294
}
\email{kalun.wong@byuh.edu}

\title[]{Farkas' Identities with Quartic Characters}
%\thanks{This research is supported by Simons Foundation Collaboration Grant}
%\subjclass[2000]{11F37,14H52,14L05}
%\begin{abstract}
%\end{abstract}

%%%%%%%%%%%%%%%%%%%%%%%%%%%%%%%%%%%%%%%%%%%%%%%%%%%%%%%%%%%%%%%%%%%%%%%%%%%%%%%%%%%%%%%%%%%%

%\vspace{3mm}
%\begin{center}{\sc 0. Introduction } \end{center}
%\vspace{3mm}

%%%%%%%%%%%%%%%%%%%%%%%%%%%%%%%%%%%%%%%%%%%%%%%%%%%%%%%%%%%%%%%%%%%%%%%%%%%%%%%%%%%%%%%%%%%
%\textcolor{red}{
\begin{abstract} 
Farkas in \cite{Farkas} introduced an arithmetic function $\delta$ and found an identity involving $\delta$ and a sum of divisor function $\sigma'$. The first-named author and Raji in \cite{Guerzhoy} 
%rewrote the identity as an identity between the generating functions of $\delta$ and $\sigma'$ and 
discussed a natural generalization of the identity by introducing a quadratic character $\chi$ 
%of a prime \textcolor{blue}{conductor} 
modulo a prime $p \equiv 3 \Mod 4$.  
In particular, it turns out that, besides the original case $p=3$ considered by Farkas, an exact analog (in a certain precise sense) of Farkas' identity
happens only for $p=7$. Recently, for  quadratic characters of small composite moduli, Williams in \cite{Williams} found a finite 
list of identities of similar flavor  using different methods.

Clearly, if $p \not \equiv 3 \Mod 4$, the character $\chi$ is either not quadratic or even. In this paper, we prove that, under certain conditions, no analogs
of Farkas' identity exist for even characters. Assuming $\chi$ to be odd quartic, we produce something surprisingly similar to the results from \cite{Guerzhoy}: 
 exact analogs of Farkas' identity happen exactly for $p=5$ and $13$.

%They showed that there exists only one identity which is, \textcolor{green}{ in a certain precise sense,}  an exact analog of the identity found by Farkas. A result on nonvanishing of the central critical value of twisted $L$-function associated to a cusp Hecke eigenform is obtained as an application. If $p \not equiv 

%In \sout{our}this paper, we discuss two analogs of Farkas' identity by considering quartic characters of \textcolor{blue}{conductor} $p \equiv 1 \Mod 4$ and prove similar results as in the paper of Guerzhoy and Raji. We also prove the nonexistence of Farkas' identities with even characters of certain \textcolor{blue}{conductor} $p$.
\end{abstract}
%}

\maketitle

\section{Introduction} \label{intro}
In 2004, Farkas \cite{Farkas} introduced an arithmetic function $\delta_F(n)$ which is defined as the difference between the number of positive divisors of $n$ that are congruent to 1 and $-1 \Mod 3$. He proved that for all positive integers $n$,
\begin{equation} \label{Farkas}
\delta_F (n)+3 \sum_{j=1}^{n-1} \delta_F (j) \delta_F (n-j) = \sigma'_3(n),
\end{equation}
where $\displaystyle \sigma'_3(n)=\sum_{\substack{d|n\\ 3 \nmid d}} d$.
This identity attracted interest, and was generalized in various directions by several authors. 
For a Dirichlet character $\chi$, define a function on positive integers by
\begin{equation} \label{delta_def}
\delta_{\chi}(n)=\sum_{0<d|n} \chi(d).
\end{equation}
Then  
\[
\delta_{F}(n)=\delta_\chi(n)
\]  
when $\chi$ is the quadratic character modulo $3$.
Recently K. Williams  in \cite{Williams} used  combinatorial arguments to prove 
$12$  identities similar to (\ref{Farkas}) while somehow more involved.
In these identities, function $\delta_\chi$ is associated with odd quadratic Dirichlet characters with small moduli such as
$3,4,8$, and $11$.

It is  convenient to define the quantities $\delta_{F}(0)=1/6$ and $\sigma'_3(0)=1/12$ so that 
Farkas' identity (\ref{Farkas}) becomes
\begin{equation} \label{Farkas_0}
\sum_{j=0}^n \delta_F (j) \delta_F (n-j) = \frac{1}{3}\sigma'_3(n)
\end{equation}
for all $n \geq 0$. For an odd quadratic character modulo a prime $p \equiv 3 \Mod 4$, it is proved  
in \cite{Guerzhoy} that an exact analog of Farkas' identity (\ref{Farkas_0}) holds if and only if $p=7$. 
However, if $p \equiv 1 \Mod 4$ then the quadratic character modulo $p$ is even, and methods 
and results of \cite{Guerzhoy} do not apply. That leaves a possibility that identities, possibly as simple and elegant as
the one originally written by Farkas, exist, and we indeed present some examples below.

Our modular forms interpretation of this kind of identities starts with an {\sl odd} Dirichlet character $\chi$. 
In order to keep our identities simple and elegant, we keep the assumption that the modulus of $\chi$ is a prime $p$.
(Consideration of composite moduli makes analogous identities more involved, therefore less transparent as one can see in \cite{Williams}.)
Since we now want to consider primes $p \not\equiv 3 \Mod 4$, we must pass from quadratic characters to characters
of higher order.

Let   $p \equiv 5 \Mod 8$ be  a prime, and consider a {\it quartic} (i.e. of exact order $4$) character modulo
$p$. There are exactly two of them, and we denote them by $\chi$ and $\overline{\chi}$ (here and throughout the bar denotes complex conjugation).  
Since $\chi$ is not real, there are two possibilities for an analog of the left hand side of (\ref{Farkas_0}).
We may thus write down two exact analogs of Farkas' identity (\ref{Farkas_0}) as follows:
\begin{equation} \label{id1}
\sum_{j=0}^n \delta_\chi (j) \delta_{\overline\chi} (n-j) = \alpha \sigma'_p(n)
\end{equation}
for some $\alpha \in \R$ and all $n \geq 0$, and
\begin{equation} \label{id2}
\sum_{j=0}^n \delta_\chi (j) \delta_\chi (n-j) = \alpha' \tilde{\sigma}_p(n) + \beta' \hat{\sigma}_p(n)
\end{equation}
for some $\alpha', \beta' \in \C$ and all $n \geq 0$.
The function $\sigma'_p$ in the right of the first identity, defined by
\[ 
\sigma'_p(n)=\sum_{\substack{0<d|n \\ p\nmid d}} d \hspace{3mm} \text{and} \hspace{3mm} \sigma'_p(0)=\frac{p-1}{24},
\]
generalizes the function $\sigma_3'$
%\textcolor{blue}{this function} 
in the original Farkas' identity directly while $\sigma$-functions in the second identity
are defined by
\[
\tilde{\sigma}_p(n)=\sum_{0<d|n} \left(\frac{p}{d}\right) d
\hspace{3mm} \text{and} \hspace{3mm} \tilde{\sigma}_p(0)=-\frac{1}{4}B_{2,\psi}
\]
and
\[
\hat{\sigma}_p(n)=\sum_{0<d|n} \left(\frac{p}{d}\right) n/d
\hspace{3mm} \text{and} \hspace{3mm} \hat{\sigma}_p(0)=0,
\]
where $B_{2,\psi}$ is the Bernoulli number associated with the Dirichlet character 
\[
\psi = \chi^2=\left( \frac{p}{\cdot} \right).
\]
While $\delta_\chi(n)$ for $n>0$ is defined in (\ref{delta_def}), we assume
\[
\delta_\chi(0) := -\frac{1}{2p} \sum_{a=1}^{p-1} \chi(a) a. 
\]

Our approach to these identities is close to that  proposed in \cite{Guerzhoy}. Specifically, we 
consider an Eisenstein series $G_{1,\chi}$ of weight $1$ such that $\delta_{\chi}(n)$ is its $n$-th Fourier coefficient.
Clearly, both $G_{1,\chi}^2$ and $G_{1,\chi}G_{1,\overline\chi}$  are 
weight $2$ modular forms. If the space of weight $2$ modular forms does not contain cusp forms, we obtain an identity 
when equate like powers of $q$ since we can explicitly write down the Fourier coefficients of weight two Eisenstein series.
Our assumptions about character $\chi$ keeps the dimension of weight $2$ Eisenstein series low. Specifically
(see e.g. \cite{Miyake} and \cite[Theorem 15.3.1]{Lang} for the Fourier expansions of Eisenstein series), in 
the absence of cusp forms,
\[
G_{1,\chi}^2 \in M_2(\Gamma_0(p), \chi^2) \hspace{3mm} \text{with} \hspace{3mm} \dim  M_2(\Gamma_0(p), \chi^2)  = 2,
\]
where $M_2(\Gamma_0(p), \chi^2)$ is generated by 
\[
\hat{G}_{2,p} = \sum_{n \geq 0} \hat{\sigma}_p(n) q^n \hspace{3mm} \text{and} \hspace{3mm}
\tilde{G}_{2,p} = \sum_{n \geq 0} \tilde{\sigma}_p(n) q^n,
\]
and 
\[
G_{1,\chi}G_{1,\overline{\chi}} \in M_2(\Gamma_0(p)) \hspace{3mm} \text{with} \hspace{3mm} \dim  M_2(\Gamma_0(p))  = 1,
\]
where $M_2(\Gamma_0(p)) $ is generated by
\[
G_2 = \sum_{n \geq 0} \sigma'_p(n) q^n.
\]
Here and throughout we assume $q=\exp(2 \pi i \tau)$ with $\Im (\tau)>0$. The identities (\ref{id1}) and (\ref{id2}) become
the identities for generating functions 
\begin{equation} \label{id1_m}
G_{1,\chi} G_{1,\overline{\chi}} = \alpha G_2
\end{equation}
and
\begin{equation} \label{id2_m}
G_{1,\chi}^2 = \alpha' \tilde{G}_{2,p} + \beta' \hat{G}_{2,p} 
\end{equation}
which one may consider simply as identities of formal power series in $q$. 

Note that the definitions of our arithmetic functions $\delta_\chi(n)$, $\sigma'_p(n)$, $\tilde{\sigma}_p(n)$, and
$\hat{\sigma}_p(n)$ for positive integers $n>0$ force our definitions above for their values at $n=0$. In other words, if the 
identities (\ref{id1}) and (\ref{id2}) hold true, these must be identities (\ref{id1_m}) and (\ref{id2_m}) between modular forms.
This is an exact analog of \cite[Proposition 1]{Guerzhoy}, and we skip the proof which is parallel to that given in \cite{Guerzhoy}.

%\textcolor{blue}{
It is easy to check that the absence of cusp forms, namely
\[
\dim S_2(\Gamma_0(p), \chi^2) = \dim  S_2(\Gamma_0(p))  = 0,
\]
happens 
(by a coincidence, simultaneously for both identities) 
if and only if $p=5$ or $13$. That implies (\ref{id1_m}) and (\ref{id2_m})  (equivalently, (\ref{id1}) and (\ref{id2}) ) and
constitutes the "if" part of our first result.
%}

\begin{thm} \label{only}
Let   $p \equiv 5 \Mod 8$ be  a prime, and let $\chi$ be a quartic Dirichlet character modulo $p$.

The identities  \textup{(\ref{id1})} and \textup{(\ref{id2})} hold if and only if $p=5$ or $13$.

Specifically, if $p=5$ then for all $n \geq 0$
\[
\sum_{j=0}^n \delta_\chi (j) \delta_{\overline\chi} (n-j) = \frac{3}{5} \sigma'_p(n),
\]
\[
\sum_{j=0}^n \delta_\chi (j) \delta_\chi (n-j) = -\frac{4 + 3 \chi(2)}{10} \tilde{\sigma}_p(n) + \frac{2+\chi(2)}{2} \hat{\sigma}_p(n),
\]
and if $p=13$ then for all $n \geq 0$
\[
\sum_{j=0}^n \delta_\chi (j) \delta_{\overline\chi} (n-j) =  \sigma'_p(n),
\]
\[
\sum_{j=0}^n \delta_\chi (j) \delta_\chi (n-j) = -\frac{\chi(2)}{2} \tilde{\sigma}_p(n) + \frac{2+3\chi(2)}{2} \hat{\sigma}_p(n).
\]
\end{thm}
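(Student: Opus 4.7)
The plan is to separate the two directions and the constant extraction. The ``if'' direction is essentially laid out in the introduction: when $p=5$ or $p=13$, both $\dim S_2(\Gamma_0(p),\chi^2)$ and $\dim S_2(\Gamma_0(p))$ vanish, so $M_2(\Gamma_0(p))$ is spanned by $G_2$ alone and $M_2(\Gamma_0(p),\chi^2)$ by $\tilde G_{2,p}$ and $\hat G_{2,p}$. The weight-$2$ products $G_{1,\chi}G_{1,\overline\chi}$ and $G_{1,\chi}^2$ therefore lie in these respective Eisenstein spans, yielding (\ref{id1_m}) and (\ref{id2_m}) automatically.

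To extract the explicit constants I match low-order Fourier coefficients on both sides. Comparing $q^0$-coefficients in (\ref{id1_m}) gives $\alpha=24|\delta_\chi(0)|^2/(p-1)$, and a direct evaluation of $\delta_\chi(0)=-\frac{1}{2p}\sum_{a=1}^{p-1}\chi(a)a$ reproduces $\alpha=3/5$ and $\alpha=1$ for $p=5$ and $p=13$ respectively. For (\ref{id2_m}), the data $\delta_\chi(0)$, $\delta_\chi(1)=1$, $\delta_\chi(2)=1+\chi(2)$ together with the leading Fourier coefficients of $\tilde G_{2,p}$ and $\hat G_{2,p}$ (notably the Bernoulli number $B_{2,\psi}$ in the constant term of $\tilde G_{2,p}$) give a $2\times 2$ linear system for $\alpha'$ and $\beta'$; the $\chi(2)$ visible in the final formulas is exactly $\delta_\chi(2)-1$.

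For the ``only if'' direction, assume $p\equiv 5\pmod 8$ with $p\notin\{5,13\}$. Standard dimension formulas for $\Gamma_0(p)$ yield $\dim S_2(\Gamma_0(p))\geq 1$ and $\dim S_2(\Gamma_0(p),\chi^2)\geq 1$, so one must rule out that the cuspidal projection of, say, $G_{1,\chi}G_{1,\overline\chi}$ happens to vanish. Mirroring \cite{Guerzhoy}, I would first pin $\alpha$ down from the constant term and then exhibit an $n\geq 1$ with $\sum_j\delta_\chi(j)\delta_{\overline\chi}(n-j)\neq\alpha\sigma'_p(n)$. Equivalently, the Petersson inner product $\langle f,G_{1,\chi}G_{1,\overline\chi}\rangle$ with a Hecke newform $f\in S_2(\Gamma_0(p))$ is a Rankin--Selberg special value whose non-vanishing forces (\ref{id1}) to fail. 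The main obstacle is exactly this uniform non-vanishing across the infinite family $p=29,37,53,\ldots$, which is the analog of the corresponding step in \cite{Guerzhoy}; the dimension accounting and constant computations are routine given the explicit Eisenstein expansions of \cite{Miyake} and \cite[Theorem~15.3.1]{Lang}.
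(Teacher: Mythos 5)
Your ``if'' direction and the extraction of the constants agree with the paper: vanishing of $S_2(\Gamma_0(p))$ and $S_2(\Gamma_0(p),\chi^2)$ for $p=5,13$ gives (\ref{id1_m}) and (\ref{id2_m}), and matching the coefficients of $q^0,q^1,q^2$ determines $\alpha,\alpha',\beta'$. The problem is the ``only if'' direction, which is the substantive part of the theorem, and there your proposal has a genuine gap rather than a proof. You correctly observe that for $p>13$ one must rule out the accidental vanishing of the cuspidal projection of $G_{1,\chi}G_{1,\overline\chi}$ (resp.\ $G_{1,\chi}^2$), and you reduce this to the non-vanishing of Rankin--Selberg special values $L(1,f)L(1,f,\chi)$ across the whole family $p=29,37,53,\dots$ --- but you then explicitly leave that non-vanishing as ``the main obstacle.'' That is precisely the statement that needs proof, and in the paper it runs in the opposite logical direction: the non-vanishing of these $L$-values is Corollary \ref{corLF}, \emph{deduced} from Theorem \ref{only} via Rankin's method, not an input to it. Moreover, ``mirroring \cite{Guerzhoy}'' does not rescue the argument: the paper's remark after Theorem \ref{only} points out that the Minkowski-estimate argument of \cite{Guerzhoy} is not available for quartic characters, which is why a different method was needed.

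The paper's actual ``only if'' argument is elementary and avoids $L$-functions and cusp-form estimates altogether. For (\ref{id1}) one writes the identity at $n=1$ and $n=2$ with $L=2\delta_\chi(0)\in\Q(i)$ and $\chi(2)=\pm i$; eliminating $\mathfrak{Im}(L)$ gives the quadratic equation $30R^2-(p+23)R+6=0$ for the rational number $R=\mathfrak{Re}(L)$, so $(p+23)^2-720$ must be a perfect square, and enumerating the factorizations of $720$ leaves only $p=5$ and $p=13$. For (\ref{id2}) one uses $n=1,2,3$ to express $B_{2,\psi}$ in terms of $\delta_\chi(0)$ and $\chi(2),\chi(3)$, finding that $B_{2,\psi}$ would have to equal $4$, $4/5$, or an irrational/complex value, while Cohen's formula gives $B_{2,\psi}>4$ for all $p\geq 29$. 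To complete your write-up you would either have to supply this kind of finite-coefficient argument or independently prove the uniform non-vanishing of the relevant Rankin--Selberg values, which you have not done.
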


\begin{rem}
Although the statement of Theorem \ref{only} is parallel to the statement of \cite[Theorem 1]{Guerzhoy}, the method
of proof in loc. cit. which makes use of Minkowski estimate is not available in our setting. We developed an alternative, 
quite elementary method instead. It is possible to use a variation of this method in order to produce an alternative proof
of \cite[Theorem 1]{Guerzhoy}.
\end{rem}

We want to emphasize that, as one can see above, our approach allows us to find and prove the identities easily.
A more involved part of the proof of Theorem \ref{only} is to show that these exact analogs of Farkas' identities for quartic characters 
hold only for the primes $5$ and $13$. That is parallel to the principal result of \cite{Guerzhoy} where it was shown that 
such exact analogs of  Farkas' identities for quadratic characters hold only for the primes $3$ (which is the original Farkas' identity) and $7$.
We present a proof of the "only if" part of Theorem \ref{only} in Section 1 of the paper.

As in \cite{Guerzhoy}, Theorem \ref{only} entails a corollary pertaining to non-vanishing of certain special values of $L$-function 
associated to modular forms. Recall that for a cusp Hecke eigenform with Fourier expansion $f=\sum_{n>0} a(n)q^n$ and
a Dirichlet character $\xi$, associated $L$-functions are defined as the analytic continuation of the series
\[
L(s,f) = \sum_{n>0} a(n) n^{-s}, \hspace{5mm} L(s,f,\xi) = \sum_{n>0} \xi(n) a(n) n^{-s}.
\]

\begin{cor}\label{corLF}
For every prime $p > 13$ satisfying $p \equiv 5 \Mod 8$, and any quartic Dirichlet character $\chi$ modulo $p$, there exists a cusp Hecke eigenform $f \in S_2(p)$ such that 
\[
L(1,f)L(1,f,\chi) \not = 0
\]
and 
there exists a cusp Hecke eigenform $g \in S_2(p,\psi)$ with $\psi=\chi^2$ such that 
\[
L(1,g)L(1,g,\chi) \not = 0.
\]

\end{cor}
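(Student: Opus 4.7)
The plan is to derive Corollary~\ref{corLF} from Theorem~\ref{only} via a Rankin--Selberg pairing, in direct analogy with the corresponding statement in \cite{Guerzhoy}. The idea is that for $p>13$ the identities (\ref{id1_m}) and (\ref{id2_m}) fail, so the cuspidal projections of $G_{1,\chi}G_{1,\overline\chi}$ and $G_{1,\chi}^2$ are nonzero; Hecke decomposition then singles out eigenforms whose Petersson pairings with these products are nonzero, and those pairings can be identified with products of $L$-values at $s=1$.

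In more detail, I would first use the dimension counts already recorded in the introduction to write
\[
G_{1,\chi}G_{1,\overline\chi}=\alpha G_2+F,\qquad G_{1,\chi}^2=\alpha'\tilde G_{2,p}+\beta'\hat G_{2,p}+G,
\]
where $\alpha,\alpha',\beta'\in\C$ are uniquely determined by the requirement that $F\in S_2(\Gamma_0(p))$ and $G\in S_2(\Gamma_0(p),\psi)$. Since Theorem~\ref{only} asserts that neither identity holds when $p>13$, necessarily $F\neq 0$ and $G\neq 0$. Expanding in orthogonal bases of normalized Hecke eigenforms,
\[
F=\sum_f c_f f,\qquad G=\sum_g d_g g,
\]
the non-vanishing of $F$ and $G$ supplies an eigenform $f$ with $c_f\neq 0$ and an eigenform $g$ with $d_g\neq 0$. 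Because cusp forms are Petersson-orthogonal to Eisenstein series, these coefficients reduce to
\[
c_f=\frac{\langle G_{1,\chi}G_{1,\overline\chi},f\rangle}{\langle f,f\rangle},\qquad d_g=\frac{\langle G_{1,\chi}^2,g\rangle}{\langle g,g\rangle}.
\]

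The main obstacle is the Rankin--Selberg step: identifying each of these numerators with a nonzero explicit multiple of the relevant product of $L$-values at $s=1$. I would apply the standard unfolding trick to one of the weight-one Eisenstein factors inside the integral, reducing the Petersson pairing to the Mellin transform of the Dirichlet series $\sum_n a_h(n)\delta_\chi(n)n^{-s}$ (for $h$ the cusp form in question) evaluated at $s=1$. A routine Euler-product computation matches this series with $L(s,h)L(s,h,\chi)$ up to a finite Euler correction which is nonvanishing at $s=1$. The calculation is exactly parallel to the one used in \cite{Guerzhoy} for quadratic characters; the only genuinely new feature in the quartic setting is that the two weight-one factors carry conjugate characters in the first identity and identical characters in the second (where their product equals the nebentypus $\psi$ of $g$), but neither change obstructs the unfolding. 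Once the identification is established, $c_f\neq 0$ and $d_g\neq 0$ force $L(1,f)L(1,f,\chi)\neq 0$ and $L(1,g)L(1,g,\chi)\neq 0$, as claimed.
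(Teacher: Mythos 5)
Your proposal is correct and follows exactly the route the paper intends: the authors omit the proof, stating only that it is a variation of Rankin's method as in \cite{Shimura} parallel to \cite[Theorem 4]{Guerzhoy}, which is precisely your argument (nonzero cuspidal projections forced by the failure of (\ref{id1_m}) and (\ref{id2_m}) for $p>13$, then unfolding the Petersson pairing against a weight-one Eisenstein factor to identify it with $L(1,\cdot)L(1,\cdot,\chi)$ up to nonvanishing factors). No substantive difference from the paper's (sketched) approach.
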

The proof of Corollary \ref{corLF} uses a variation of  Rankin's method as described in \cite{Shimura} and, since it is 
parallel to the proof of \cite[Theorem 4]{Guerzhoy}, we skip it.

As it is observed in \cite[Theorem 2]{Guerzhoy}, the identities, though never hold except for the two primes, are 
always not far from being true: the orders of magnitude of left and right hand sides are the same. That happens because
the obstruction for these identities to hold comes from cusp forms whose 
Fourier coefficients grow slower than those of Eisenstein series. 
In order to formulate a precise result, abbreviate the left-hand sides of our identities  (\ref{id1}) and (\ref{id2}) :
\[
\cF_{\chi}(n)= \sum_{j=0}^{n}\delta_{\chi}(j)\delta_{\overline{\chi}}(n-j),
\]
\[
\mathcal{H}_{\chi}(n)= \sum_{j=0}^{n}\delta_{\chi}(j)\delta_{\chi}(n-j) .
\]

The quantities $\alpha$, $\alpha'$, and $\beta'$ in (\ref{id1}) and (\ref{id2}) depend on the character $\chi$, and can be easily calculated (assuming that the identities hold true). 
%In particular, one has that
We thus set
\[
\alpha = \frac{|\delta_{\chi}(0)|^2}{\sigma'_p(0)},
\]
and
\[
\alpha' = \frac{\delta_\chi(0)^2}{\tilde{\sigma}_p(0)} \hspace{3mm} \textup{and} \hspace{3mm}
\beta' = 2 \delta_\chi(0) -  \frac{\delta_\chi(0)^2}{\tilde{\sigma}_p(0)}.
\]

The limits in Theorem \ref{limit1_} are taken over $n$ not divisible by $p$, and within the range of a fixed value of the 
Kronecker symbol $\left(\frac{p}{n}\right)$ in part (b).

\begin{thm} \label{limit1_}
\textup{(a)}
For any prime $p \equiv 5 \Mod 8$ and a quartic Dirichlet character modulo $p$,
if $n$ is large and is not divisible by $p$, then the left and right sides of identity \textup{(\ref{id1})} are close.
Specifically, we have that
\begin{equation*}
\lim_{\itop{n \to \infty}{p \nmid n}} \frac{\cF_{\chi}(n)}{\sigma'_p(n)}=\alpha.
\end{equation*}

\textup{(b)} 
For any prime $p \equiv 5 \Mod 8$ and a quartic Dirichlet character modulo $p$,
if $n$ is large and is not divisible by $p$, then the left and right sides of identity \textup{(\ref{id1})} are closely related.
Specifically, we have that
\begin{equation*}
\lim_{\itop{n \to \infty}{p \nmid n}}\frac{\cH_{\chi}(n)}{\tilde{\sigma}_p(n)}= \alpha'+\left(\frac{p}{n} \right)\gamma.
\end{equation*}
with some $\gamma \in \C$.
\end{thm}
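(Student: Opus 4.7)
The plan is to decompose each of the weight-two forms $G_{1,\chi}G_{1,\overline\chi}$ and $G_{1,\chi}^2$ uniquely into its Eisenstein and cuspidal parts, read off the resulting identities of Fourier coefficients, and then show that the cuspidal contribution is negligible in the relevant ratios. Concretely, write
\[
G_{1,\chi}G_{1,\overline\chi} = \alpha\, G_2 + S, \qquad G_{1,\chi}^2 = \alpha'\,\tilde G_{2,p} + B\,\hat G_{2,p} + S',
\]
with $S \in S_2(\Gamma_0(p))$ and $S' \in S_2(\Gamma_0(p),\psi)$. Matching constant terms (and using $\hat\sigma_p(0)=0$) forces $\alpha$ and $\alpha'$ to equal the values $|\delta_\chi(0)|^2/\sigma'_p(0)$ and $\delta_\chi(0)^2/\tilde\sigma_p(0)$ defined in the paper, while $B\in\C$ is the coefficient of $\hat G_{2,p}$ produced by the decomposition itself (in general different from the paper's $\beta'$ when $p>13$). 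Comparing coefficients of $q^n$ yields
\[
\cF_\chi(n) = \alpha\,\sigma'_p(n) + a_S(n), \qquad \cH_\chi(n) = \alpha'\,\tilde\sigma_p(n) + B\,\hat\sigma_p(n) + a_{S'}(n),
\]
where $a_S(n), a_{S'}(n)$ denote the Fourier coefficients of the two cusp forms.

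Next, I would invoke the Ramanujan--Petersson bound for weight-two cusp forms to obtain $a_S(n),\,a_{S'}(n) = O_\varepsilon(n^{1/2+\varepsilon})$. Part~(a) then follows immediately: for $p\nmid n$ one has $\sigma'_p(n) = \sigma(n) \geq n$, so $a_S(n)/\sigma'_p(n) = O(n^{-1/2+\varepsilon}) \to 0$, giving the stated limit~$\alpha$.

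For part~(b) the key arithmetic step is that when $p\nmid n$, substituting $d\mapsto n/d$ and using multiplicativity of $\psi$ (together with $\psi(m)^2=1$) gives
\[
\hat\sigma_p(n) = \sum_{d\mid n}\psi(d)\,\frac{n}{d} = \sum_{e\mid n}\psi(n/e)\,e = \psi(n)\sum_{e\mid n}\psi(e)\,e = \left(\frac{p}{n}\right)\tilde\sigma_p(n).
\]
Hence, for $p\nmid n$,
\[
\frac{\cH_\chi(n)}{\tilde\sigma_p(n)} = \alpha' + B\left(\frac{p}{n}\right) + \frac{a_{S'}(n)}{\tilde\sigma_p(n)},
\]
so restricting to a fixed value of $\left(\frac{p}{n}\right)$ makes the main term a constant, and the theorem (with $\gamma:=B$) reduces to showing $a_{S'}(n)/\tilde\sigma_p(n)\to 0$.

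The main obstacle is therefore the lower bound $|\tilde\sigma_p(n)|\gg_\varepsilon n^{1-\varepsilon}$ for $p\nmid n$. I would derive this from the multiplicativity of $\tilde\sigma_p$: for a prime $q\neq p$,
\[
\tilde\sigma_p(q^k) = \sum_{j=0}^k \psi(q)^j q^j = \frac{(\psi(q)q)^{k+1}-1}{\psi(q)q-1},
\]
and a brief case analysis on $\psi(q)=\pm 1$ and the parity of $k$ shows $|\tilde\sigma_p(q^k)|\geq q^k/2$. Multiplicativity then yields $|\tilde\sigma_p(n)|\geq n\cdot 2^{-\omega(n)}$, and the classical bound $\omega(n)=O(\log n/\log\log n)$ gives $|\tilde\sigma_p(n)|\gg_\varepsilon n^{1-\varepsilon}$. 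Combined with the Ramanujan--Petersson estimate, this yields $a_{S'}(n)/\tilde\sigma_p(n) = O_\varepsilon(n^{-1/2+2\varepsilon}) \to 0$, completing part~(b).
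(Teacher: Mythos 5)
Your proposal is correct and follows essentially the same route as the paper: decompose the weight-two products into Eisenstein plus cuspidal parts, identify $\alpha$ and $\alpha'$ from constant terms, use the relation $\hat\sigma_p(n)=\left(\frac{p}{n}\right)\tilde\sigma_p(n)$ for $p\nmid n$, bound $|\tilde\sigma_p(n)|\geq n\,2^{-\omega(n)}$ via multiplicativity, and kill the cuspidal contribution with the Deligne (Ramanujan--Petersson) bound together with $\omega(n)=O(\log n/\log\log n)$. The only differences are cosmetic (a direct $d\mapsto n/d$ substitution instead of the paper's prime-power reduction, and an $O_\varepsilon(n^{1/2+\varepsilon})$ coefficient bound, which is in fact the more carefully stated form).
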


We prove Theorem \ref{limit1_} in Section 2.

%The interesting part of Theorem \ref{only} is, of course, its "only if" part. Namely, 
Although Theorem \ref{limit1_} states
that identities are missed only narrowly, Theorem \ref{only} guarantees that 
exact analogs of Farkas' identities 
(\ref{id1}) and (\ref{id2}) 
for $p \equiv 5 \Mod 8$ 
hold only for $p=5$ and $13$. However, infinitely many similar, though less elegant, identities
hold for any such prime. Our next result is parallel to \cite[Theorem 3]{Guerzhoy}. The proof is also similar to that given in
\cite{Guerzhoy}. Specifically, one writes the identity which involves cusp forms and eliminates cusp forms with the help of action of 
Hecke operators.

\begin{thm} \label{many_id}

\textup{(a)}   
Let $t_p= \dim S_2(p)$.
There exist a set of complex numbers $A_i$ and two sets of positive integers $B_i$ and $C_i$ for $i=1,...,3^{t_p}$ (all three sets depend on specific $p \equiv 5 \Mod 8$) such that for any positive integer $n$
\begin{equation*}
\sum_{i=1}^{3^{t_p}} A_i\cF_{\chi}\left(\frac{n}{B_i}C_i\right)=\sigma'_p(n),
\end{equation*}
where we assume
\begin{equation*}
\cF_{\chi} \left(\frac{n}{B}C\right)=0
\end{equation*}
if $n$ is not divisible by $B$.

\textup{(b)}
Let $t_p= \dim S_2(p,\psi)$.
There exist a set of complex numbers $A_i$, two sets of positive integers $B_i$ and $C_i$ for $i=1,...,3^{t_p}$ (all three sets depend on specific $p \equiv 5 \Mod 8$), and two complex numbers, $\alpha''$ and $\beta''$, such that for any positive integer $n$
\begin{equation*}
\sum_{i=1}^{3^{t_p}} A_i\cH_{\chi} \left(\frac{n}{B_i}C_i\right)=\alpha'' \tilde{\sigma}_p(n)+\beta'' \hat{\sigma}_p(n),
\end{equation*}
where we assume
\begin{equation*}
\cH_{\chi} \left(\frac{n}{B}C\right)=0
\end{equation*}
if $n$ is not divisible by $B$.
\end{thm}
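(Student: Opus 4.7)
The plan is to refine the approach of Theorem \ref{only} along the lines of \cite[Theorem 3]{Guerzhoy}: start from the Eisenstein-plus-cuspidal decomposition of $G_{1,\chi}G_{1,\overline{\chi}}$ (respectively $G_{1,\chi}^2$) in the appropriate weight-$2$ space, and eliminate the cuspidal contribution by applying a Hecke operator that scales the Eisenstein part nontrivially while annihilating every cuspidal Hecke eigenform.

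For part (a), I decompose
\[
G_{1,\chi}G_{1,\overline{\chi}} = \alpha G_2 + \sum_{i=1}^{t_p} c_i f_i \in M_2(\Gamma_0(p)),
\]
where $\{f_1,\dots,f_{t_p}\}$ is a Hecke eigenbasis of $S_2(\Gamma_0(p))$; since $p$ is prime, every $f_i$ is a newform with eigenvalues $a_i(\ell)$ under $T_\ell$ for $\ell\ne p$. I then apply the operator
\[
\mathcal{T} = \prod_{i=1}^{t_p}\bigl(T_{\ell_i} - a_i(\ell_i)\bigr),
\]
where the primes $\ell_i \ne p$ are chosen so that $a_i(\ell_i) \ne 1+\ell_i$; this holds for every $\ell_i \ne p$ by the Ramanujan–Petersson bound. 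Then $\mathcal{T}$ annihilates each $f_i$ and scales $G_2$ by the nonzero constant $\Lambda = \prod_i ((1+\ell_i) - a_i(\ell_i))$. The main step is to translate $\mathcal{T}$ into an identity of Fourier coefficients: since $(T_\ell f)(n) = a_{\ell n}(f) + \ell\, a_{n/\ell}(f)$ for $\ell \ne p$, each factor $(T_{\ell_i} - a_i(\ell_i))$ contributes three terms at the $n$-th coefficient, namely the coefficient at $\ell_i n$, $\ell_i$ times the coefficient at $n/\ell_i$ (zero unless $\ell_i \mid n$), and $-a_i(\ell_i)$ times the coefficient at $n$. Expanding the $t_p$-fold product yields $3^{t_p}$ terms of the form $A_i \cdot \cF_\chi\bigl((n/B_i)C_i\bigr)$, with $B_i$ and $C_i$ the products of the $\ell_j$'s one divides or multiplies by, respectively. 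Dividing by $\alpha\Lambda$ completes part (a).

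Part (b) runs in parallel inside the $2$-dimensional Eisenstein subspace of $M_2(\Gamma_0(p),\psi)$. Since both $\tilde{G}_{2,p}$ and $\hat{G}_{2,p}$ are Hecke eigenforms, the analogous operator $\mathcal{T}$ (now built from Hecke eigenvalues of a basis of $S_2(\Gamma_0(p),\psi)$) scales each of them by constants $\Lambda_1,\Lambda_2$ while annihilating the cuspidal part, yielding the claimed identity with $\alpha''=\alpha'\Lambda_1$ and $\beta''=\beta'\Lambda_2$. The main obstacle I foresee is the careful choice of the primes $\ell_i$ ensuring that $\mathcal{T}$ does not accidentally annihilate \emph{both} Eisenstein summands (which would trivialize the identity); comparing the Deligne bound $|a_i(\ell)|\le 2\sqrt{\ell}$ with Eisenstein eigenvalues of size comparable to $\ell$ shows that any sufficiently large $\ell_i$ works, so the remaining verification reduces to the same Fourier-coefficient bookkeeping already carried out in part (a).
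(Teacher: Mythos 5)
Your proposal is correct and is essentially the paper's own (only sketched) argument: decompose $G_{1,\chi}G_{1,\overline{\chi}}$, resp. $G_{1,\chi}^2$, into Eisenstein plus cuspidal parts and annihilate the cusp forms with the operator $\prod_{i=1}^{t_p}\bigl(T_{\ell_i}-a_i(\ell_i)\bigr)$, whose coefficient-level expansion produces exactly the $3^{t_p}$ terms (the paper's $p=37$ example is the special case where the chosen eigenvalues vanish). Two harmless adjustments: take the $\ell_i$ pairwise distinct so that the stated divisibility convention matches the Hecke bookkeeping, and in part (b) the ``divide'' terms carry $\psi(\ell_i)\ell_i$ rather than $\ell_i$, with the coefficient of $\hat{G}_{2,p}$ in the decomposition being some $\gamma$ rather than the introduction's $\beta'$ --- neither affects the existence statement.
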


We omit details of the proof and provide an example (to Theorem \ref{many_id}(a)) instead.
Let $p=37$. Then $\dim S_2(37)=2$, and the space admits a basis out of two Hecke eigenforms with rational integer
coefficients. We make use of the fact that one of these cusp forms has zero Hecke eigenvalues at primes $p_1=2$ and $5$,
while another one has zero Hecke eigenvalues at $p_2=17$ and $19$. That allows us to produce the identities 
for $n > 0$:
\[
\cF_\chi(p_1p_2n)+p_1\cF_\chi(p_2n/p_1) + p_2\cF_\chi(p_1n/p_2) + p_1p_2\cF_\chi(n/(p_1p_2)) = 
\frac{1+p_1+p_2+p_1p_2}{3} \sigma'_{37}(n),
\]
where $\chi$ is a quartic character modulo $37$, and one may pick any $p_1 \in \{2,5\}$ and $p_2 \in \{17,19\}$.
As it is usual for arithmetic functions, we set $\cF_\chi(m)=0$ unless $m$ is a non-negative integer.

Our analysis of analogs for Farkas' identity relies heavily on their interpretation as identities for modular forms: it started  
with the interpretation of the values of the arithmetic function $\delta_\chi$ as Fourier coefficients of certain weight one Eisenstein series.
This interpretation is only possible if the Dirichlet character $\chi$ is odd. 
We believe that this interpretation explains all identities of this kind, and 
%We believe that all identities of this kind \textcolor{blue}{can
%be produced} in the framework of this interpretation, and 
we present a partial theoretical result in this direction which we prove in Section 3 of the paper.
Note that while Theorem \ref{EvenChar_} has a fairly strong restriction on the modulus of {\it even} character $\chi$ under consideration, it has no
restrictions at all on its order.

\begin{thm}\label{EvenChar_}
Let $q \equiv 1 \Mod 4$ be a prime, and assume that $p=2q+1$ is a prime.
If $\chi$ be any even  Dirichlet character modulo $p$ then identity \textup{(\ref{id1})} cannot hold true for all $n \geq 1$.

%\textcolor{blue}{Neither identity \textup{(\ref{id1})} nor \textup{(\ref{id2})}}  holds for all $n \geq1$. 
\end{thm}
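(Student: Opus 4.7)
The plan is to interpret identity (\ref{id1}) for even $\chi$ as a formal power-series identity in which the ``boundary'' values $\delta_\chi(0)$ and $\delta_{\overline\chi}(0)$, together with the proportionality constant $\alpha \in \R$, are treated as free complex parameters --- no weight-one Eisenstein-series interpretation is available to pin down $\delta_\chi(0)$ in the even case --- and then to show that no choice of these parameters can make the identity hold for all $n \ge 1$.

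Equivalently, I would aim to rule out every triple $(c_1, c_2, \alpha)$ with $c_1, c_2 \in \C$ and $\alpha \in \R$ such that $G_\chi(\tau)\,G_{\overline\chi}(\tau) - \alpha\,G_2(\tau) \in \C$ as a formal $q$-series, where $G_\chi(\tau) = c_1 + \sum_{n \ge 1}\delta_\chi(n)q^n$ and similarly for $G_{\overline\chi}$ with constant term $c_2$.  For odd $\chi$ this would be automatic (the left-hand side is then a genuine weight-two holomorphic modular form on $\Gamma_0(p)$); for even $\chi$ the series $G_\chi$ is only the holomorphic part of a non-holomorphic Eisenstein series of weight one, so the identity would amount to a genuine accidental coincidence --- which the theorem says does not happen.

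The first step is to use the hypothesis $p=2q+1$, $q$ prime, to observe that the group of even Dirichlet characters modulo $p$ is cyclic of prime order $q$; hence $\chi$ is either trivial or of exact order $q$, and the two cases are handled separately.  The trivial case $\chi = \chi_0$ can be disposed of by a direct linear-algebra check using identity (\ref{id1}) at $n = 1, 2, 3$: these give three equations in the two unknowns $\delta_{\chi_0}(0)$ and $\alpha$, which I would verify are generically inconsistent (using the explicit values $\delta_{\chi_0}(n) = $ number of divisors of $n$ coprime to $p$ and the closed form of $\sigma'_p(n)$ for small $n$).  In the nontrivial case the character takes values in $\Q(\zeta_q)$; I would use the equations at $n = 1, 2$ to reduce the parameter space to a one-parameter family, and then evaluate the identity at $n = p$, where $\delta_\chi(p) = \delta_{\overline\chi}(p) = 1$, $\sigma'_p(p) = 1$, and the inner convolution $\sum_{j=1}^{p-1}\delta_\chi(j)\,\delta_{\overline\chi}(p-j)$ should be rearranged into a character sum evaluable in terms of the Gauss sum $G(\chi)$ via $|G(\chi)|^2 = p$.

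The main obstacle will be the Gauss-sum computation at $n = p$ and the verification that the resulting new linear constraint is incompatible with those coming from $n = 1, 2$; this is precisely where the hypothesis $q \equiv 1 \pmod 4$ (equivalently $p \equiv 3 \pmod 8$, which forces $2$ to be a primitive root modulo $p$) should enter, by rigidifying the arithmetic of character values enough to preclude any accidental cancellation in the final comparison.
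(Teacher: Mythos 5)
Your reduction is the same as the paper's first step and is fine: using $n=1$ to get $\alpha=\delta_\chi(0)+\delta_{\overline\chi}(0)$ and then $n=p$ (where $\delta_\chi(p)=\delta_{\overline\chi}(p)=\sigma'_p(p)=1$), the identity forces
\[
\sum_{j=1}^{p-1}\delta_\chi(j)\,\delta_{\overline\chi}(p-j)=0,
\]
so everything hinges on proving this sum is \emph{nonzero} for every even $\chi$ (for the trivial character this is immediate since all terms are positive, so no ``generic inconsistency'' check at $n=1,2,3$ is needed there). The genuine gap is your treatment of the nontrivial even characters: the sum is not a complete character sum. Unfolding the divisors, it equals $\sum \chi(d)\overline{\chi}(d')$ taken over all quadruples with $de+d'e'=p$, i.e.\ a divisor-convolution over representations of $p$, and there is no evaluation of such a quantity ``in terms of the Gauss sum $G(\chi)$ via $|G(\chi)|^2=p$.'' For even $\chi$ the series $\sum_n\delta_\chi(n)q^n$ is not a holomorphic weight-one form, so no modular-forms evaluation is available either; asserting that the $n=p$ constraint will be ``incompatible'' with $n=1,2$ because $q\equiv1\pmod 4$ ``rigidifies the arithmetic'' is exactly the statement that needs proof, and your sketch does not supply it.

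For comparison, the paper does not evaluate the sum at all; it only proves nonvanishing, and this takes real work. Since $q\equiv1\pmod4$ forces $2$ to be a primitive root mod $p$ (you correctly noticed this), one can write every character as $\chi^k$ with $\chi(2)=\zeta=e^{2\pi i/(p-1)}$, encode $\delta_{\chi^k}(j)$ as the value at $\zeta^k$ of an integer-coefficient polynomial $h_{\chi,j}(x)=\sum_{d\mid j}x^{t(\chi,d)}$, and set $f_\chi(x)=\sum_{j=1}^{p-1}h_{\chi,j}(x)h_{\overline\chi,p-j}(x)$, so the sum for $\chi^k$ is $f_\chi(\zeta^k)$. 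The sum vanishes for all odd $k$ (the odd-character case, where each term is purely imaginary while the total is real), so after reducing modulo $x^{p-1}-1=(x^q-1)(x^q+1)$ one gets $g(x)=(x^q+1)P(x)$ with $\deg P\le q-1$; if some even power of $\zeta$ were also a root, the irreducibility of $\Phi_q$ ($q$ prime) would force $P$ to be a constant multiple of $\Phi_q$, which is then contradicted by an explicit computation of four coefficients of $f_\chi$, namely $b_0=p-1$, $b_1=\tfrac{p-1}{2}+1$, $b_{p-1}=b_p=0$. Nothing in your proposal replaces this mechanism, so as written the proof does not go through.
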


%%%%%%%%%%%%%%%%%%%%%%%%%%%%%%%%%%%%%%%%%%%%%%%%%%
\section{Proof of Theorem \ref{only}} 

In this section, we prove the "only if" part of Theorem \ref{only}.

We start with identity (\ref{id1}), and we want to prove that it cannot be true for all $n \geq 1$ if $p>13$.
We actually shall show that if $p>13$, the identity (\ref{id1}) already cannot hold simultaneously for $n=1$ and $n=2$.

\begin{proof}[Proof of "only if" part for \textup{(\ref{id1})}]

For $n=1$ the identity reads

\begin{equation}\label{Coeq_}
\delta_{\chi}(1)\delta_{\overline{\chi}}(0)+\delta_{\chi}(0)\delta_{\overline{\chi}}(1)=\frac{|\delta_{\chi}(0)|^2}{\sigma'_p(0)} \sigma'_p(1).
\end{equation}
Note that $\delta_{\chi}(1)=\sigma'_p(1)=1$ and $\sigma'_p(0)=\frac{p-1}{24}$.
We abbreviate
\[
\delta_{\chi}(0)=-\frac{1}{2p} \sum_{a=1}^{p-1} \chi(a) a=\frac{L}{2} \in \Q(i).
\]
(In fact, by e.g.    \cite[Theorem 12.20]{Apostol}, $L=L(0,\chi)$, but we will not make use of any properties of Dirichlet $L$-function here.) 
Substitute all these quantities into (\ref{Coeq_}), and obtain that 
\begin{equation} \label{LEqu1_}
\frac{p-1}{6} \cdot \mathfrak{Re}(L)=|L|^2
\end{equation}

For $n=2$, the identity (\ref{id1}) reads
\[
\delta_{\chi}(2)\delta_{\overline{\chi}}(0) + \delta_{\chi}(1)\delta_{\overline{\chi}}(1)+\delta_{\chi}(0)\delta_{\overline{\chi}}(2)=\frac{|\delta_{\chi}(0)|^2}{\sigma'_p(0)} \sigma'_p(2).
\]
Note that the modulo $p$ character $\chi^2$ must be a non-trivial quadratic character modulo $p$, therefore $\chi^2$
must coincide with the Kronecker symbol.
In particular, 
\[
\chi(2)^2= \left(\frac{p}{2}\right)=-1
\] 
since $p \equiv 5 \Mod 8$, and therefore
\[
\chi(2)=\pm i
\]
Since obviously  $\sigma'_p(2)=3$, our identity for $n=2$
 transforms to

%Thus, $\delta_{\chi}(2)=1 \pm i$. Also we have $\sigma'_p(2)=3$. Using (\ref{Coeq2}) and (\ref{sigma}), we can derive the equality
\begin{equation} \label{LEqu2_}
\frac{p-1}{18} \left[\mathfrak{Re}(L) \pm \mathfrak{Im}(L)+1\right]=|L|^2.
\end{equation}
It follows from (\ref{LEqu1_}) and (\ref{LEqu2_}) that 
\[
\mathfrak{Im}(L)= \pm [2\mathfrak{Re}(L)- 1].
\]
Denote $R=\mathfrak{Re}(L)$, and note that $R \in \Q$ since $L \in \Q(i)$.
We thus have
\[
L=R \pm (2R-1)i,
\]
and equation (\ref{LEqu1_}) now becomes
\begin{equation*}
\frac{p-1}{6}\cdot R= R^2+(2R-1)^2
\end{equation*}
and transforms to
\begin{equation}
30R^2-(p+23)R+6=0.
\end{equation}
Since a rational $R$ satisfies this quadratic equation, its discriminant $(p+23)^2-4(30)(6)=(p+23)^2-720$ being an integer, must be a  perfect square, that is
\[
(p+23)^2-720=x^2,
\]
with a positive integer $x$.
Equivalently, 
\[
(p+23+x)(p+23-x)=720.
\]
We now look at all  factorizations of $720$ into a product of two positive integers, and find all possible values of
$p$ and $x$. We find that $p=5$ and $p=13$ are the only possibilities.

\end{proof}

We now pass to the identity (\ref{id2}).  The proof is a bit more technical though it is based on a similar combination of ideas.
Specifically, we want to show that, if $p>13$, the identity cannot hold simultaneously for $n=1$,$2$, and $3$.

We will need some information about the rational number $B_{2,\psi}$ involved in the definition
of $\tilde{\sigma}_p(0) = - B_{2,\psi}/4$ above.

\begin{prop} \label{Bernoulli}
For a prime $p \equiv 5 \Mod 8$ with $p>13$ and the quadratic Dirichlet character $\psi$ modulo $p$, the generalized Bernoulli number $B_{2,\psi}$ is a rational number bigger than $4$. When $p=5$,  $B_{2,\psi}=4/5$ and when $p=13$,  $B_{2,\psi}=4$.
\end{prop}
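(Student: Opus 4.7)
The plan is to derive a closed-form expression for $B_{2,\psi}$ that makes rationality manifest, and then bound it from below via the functional equation together with a crude Euler-product estimate. The standard identity
\[
B_{2,\psi} \;=\; p\sum_{a=1}^{p-1}\psi(a)\,B_{2}(a/p),
\]
combined with the vanishing of $\sum_{a}\psi(a)$ (non-triviality of $\psi$) and of $\sum_{a}\psi(a)\,a$ (evenness of $\psi$, which holds because $p\equiv 1\Mod 4$), collapses at once to
\[
B_{2,\psi} \;=\; \frac{1}{p}\sum_{a=1}^{p-1}\psi(a)\,a^{2}.
\]
Since $\psi$ takes values in $\{0,\pm 1\}$, rationality is automatic, and direct evaluation of the four-term and twelve-term sums at $p=5$ and $p=13$ produces $4/5$ and $4$ respectively, handling the two exceptional cases.

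For $p>13$, I would pass from this finite sum to $L(2,\psi)$ via the functional equation of the Dirichlet $L$-function. Because $\psi$ is an even primitive real character of conductor $p$ with Gauss sum $\tau(\psi)=\sqrt{p}$ (the classical evaluation for $p\equiv 1\Mod 4$), the completed $L$-function
\[
\Lambda(s,\psi) \;=\; (p/\pi)^{s/2}\,\Gamma(s/2)\,L(s,\psi)
\]
is self-dual under $s\mapsto 1-s$. Comparing the values at $s=2$ and $s=-1$, using $\Gamma(-1/2)=-2\sqrt{\pi}$ together with the defining relation $L(-1,\psi)=-B_{2,\psi}/2$, yields
\[
B_{2,\psi} \;=\; \frac{p^{3/2}}{\pi^{2}}\,L(2,\psi).
\]

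A lower bound on $L(2,\psi)$ now comes cheaply from its Euler product, since every local factor $(1-\psi(l)l^{-2})^{-1}$ is at least $(1+l^{-2})^{-1}$:
\[
L(2,\psi) \;\ge\; \prod_{l}\bigl(1+l^{-2}\bigr)^{-1} \;=\; \frac{\zeta(4)}{\zeta(2)} \;=\; \frac{\pi^{2}}{15}.
\]
Hence $B_{2,\psi} \ge p^{3/2}/15$, which strictly exceeds $4$ as soon as $p>60^{2/3}\approx 15.3$; in particular for every prime $p\equiv 5\Mod 8$ with $p>13$ (the next one being $p=29$, already giving the bound $\approx 10.4$). The one place where care is genuinely needed is the $\Gamma$-factor and sign bookkeeping in the functional equation together with the classical determination of the sign of the Gauss sum; both are standard, and no genuinely hard step is involved.
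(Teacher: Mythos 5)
Your proof is correct, but it follows a genuinely different route from the paper. The paper invokes Cohen's identity $B_{2,\psi}=-2H(2,p)$, which expresses $B_{2,\psi}$ as $\tfrac{2}{5}\sum_{s^2<p}\sigma\bigl((p-s^2)/4\bigr)$, a manifestly positive combination of divisor sums; rationality and the two exceptional values fall out of the same formula, and the lower bound for $p\geq 29$ comes from keeping just the single term $\sigma\bigl((p-1)/4\bigr)\geq 8$, giving $B_{2,\psi}\geq 32/5>4$ with no analysis at all. You instead get rationality and the values at $p=5,13$ from the elementary closed form $B_{2,\psi}=\tfrac{1}{p}\sum_{a=1}^{p-1}\psi(a)a^2$ (your reduction using $\sum_a\psi(a)=0$ and, by evenness, $\sum_a\psi(a)a=0$ is valid, and the two evaluations do give $4/5$ and $4$), and you get the lower bound analytically: the functional equation with root number $+1$ (Gauss's evaluation $\tau(\psi)=\sqrt{p}$ for $p\equiv 1\Mod 4$) correctly yields $B_{2,\psi}=\tfrac{p^{3/2}}{\pi^2}L(2,\psi)$, and the termwise Euler-product comparison $L(2,\psi)\geq\zeta(4)/\zeta(2)=\pi^2/15$ is legitimate since every local factor is positive, so $B_{2,\psi}\geq p^{3/2}/15>4$ already for $p\geq 17$. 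Your approach is self-contained within classical analytic machinery, avoids citing Cohen's theorem, and gives a bound of the true order of magnitude $p^{3/2}$; the paper's approach is shorter modulo the cited identity and stays entirely within elementary divisor-sum estimates. Both establish exactly what the proposition claims.
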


\begin{proof}
%We retain our assumption $p \equiv 5 \Mod 8$. 
Recall that $\psi=\chi^2$ is the non-trivial quadratic character modulo $p$ (the Kronecker symbol).
For the generalized Bernoulli number $B_{2,\psi}$ (one has that $L(-1,\psi)= - B_{2,\psi}/2$) Cohen proved in \cite{Cohen} that
\[
B_{2,\psi} = -2H(2,p),
\]
where the quantity $H(2,p)$ can be calculated as
\[
H(2,p)=-\frac{1}{5} \sum_{s} \sigma \left(\frac{p-s^2}{4}\right),
\]
where the sum runs over all integers $s$ such that $p>s^2$ and $\sigma$ stands for the usual divisor sum function.
In particular, one easily finds that
\[
B_{2,\psi}= \begin{cases}
4/5 & \text{if $p=5$,} \\
4 & \text{if $p=13$.}
\end{cases}
\]

If $p>13$ then $p \geq 29$, and we have that
\[
B_{2,\psi}  = \frac{2}{5} \sum_{s} \sigma \left(\frac{p-s^2}{4}\right) \geq   
\frac{4}{5}\left( \sigma \left(\frac{p-1}{4} \right) \right) \geq \frac{32}{5} >4
\]
because $(p-1)/4 \geq 7$ and 
\[
\sigma \left(\frac{p-1}{4} \right) \geq 1 + 7 = 8.
\]
\end{proof}

We are now ready to finish the proof of Theorem \ref{only}.

\begin{proof}[Proof of "only if" part for \textup{(\ref{id2})}]

We start by using (\ref{id2}) for $n=0$ and $1$ to find the quantities $\alpha'$ and $\beta'$ and plug them into the 
identity (\ref{id2}) for $n=2$:

%\begin{equation} \label{Coeq2b}
\[
2\delta_{\chi}(0)\delta_{\chi}(2)+\delta_{\chi}(1)\delta_{\chi}(1)=\frac{\delta_{\chi}(0)^2}{\tilde{\sigma}_p(0)} \tilde{\sigma}_p(2)+\frac{2\delta_{\chi}(0)\delta_{\chi}(1)-\frac{\delta_{\chi}(0)^2}{\tilde{\sigma}_p(0)}\tilde{\sigma}_p(1)}{\hat{\sigma}_p(1)}\hat{\sigma}_p(2).
\]
%\end{equation}

As previously, we use the obvious values 
$\delta_{\chi}(1)=\tilde{\sigma}_p(1)=\hat{\sigma}_p(1)=1$, $\delta_{\chi}(2)=1 + \chi(2)$, $\hat{\sigma}_p(2)=1$ 
and $\tilde{\sigma}_p(2)=-1$ along with $\tilde{\sigma}_p(0)=-\frac{1}{4}B_{2,\psi}$ and simplify the identity to

\begin{equation}\label{QuadEq_}
\frac{\delta_{\chi}(0)^2}{-\frac{1}{4} B_{2,\psi}} = -\chi(2)\delta_{\chi}(0)-\frac{1}{2}.
\end{equation}

A similar calculation simplifies the identity (\ref{id2}) for $n=3$ to
%\begin{align}
%2\delta_{\chi}(0)\delta_{\chi}(3)+2\delta_{\chi}(1)\delta_{\chi}(2)&=\frac{\delta_{\chi}(0)^2}{\tilde{\sigma}_p(0)} \tilde{\sigma}_p(3)+\left(2\delta_{\chi}(0)-\frac{\delta_{\chi}(0)^2}{\tilde{\sigma}_p(0)}\right)\hat{\sigma}_p(3)\\
\[
2(1+\chi(3))\delta_{\chi}(0)+2(1 +\chi(2))=\frac{\delta_{\chi}(0)^2}{-\frac{1}{4}B_{2,\psi}} (1+3\psi(3))+\left(2\delta_{\chi}(0)-\frac{\delta_{\chi}(0)^2}{-\frac{1}{4}B_{2,\psi}}\right)(3+\psi(3)),
\]
and we combine it with 
(\ref{QuadEq_}) to obtain
%\[
%2(1+\chi(3))\delta_{\chi}(0)+2(1 +\chi(2))=\left(-\chi(2) \delta_{\chi}(0)-\frac{1}{2}\right)(1+3\psi(3))+\left(2\delta_{\chi}(0) +\chi(2) \delta_{\chi}(0) + \frac{1}{2}\right)(3+\psi(3)). \label{Q3Coesim}
%\]

\begin{align*}
2(1+\chi(3))\delta_{\chi}(0)+2(1 +\chi(2) )&=\left(-\chi(2) \delta_{\chi}(0)-\frac{1}{2}\right)(1+3\psi(3)) & \\
&+\left(2\delta_{\chi}(0) +\chi(2) \delta_{\chi}(0) + \frac{1}{2}\right)(3+\psi(3)).  \\
\end{align*}

Since $\chi$ is a quartic character, $\chi(3) \in \{\pm 1,\pm i\}$. The above equation allows us to find the quantity 
$\delta_\chi(0)$ which corresponds to every one of these four values. With this quantity, we make use of 
equation (\ref{QuadEq_}) to find the corresponding values of $B_{2,\psi}$. These turn out to be either $4$, or $4/5$, or
a complex number which is not rational. %An application of Proposition \ref{Bernoulli} to this data finishes the proof.
Proposition \ref{Bernoulli} now implies that the only possibilities left are $p=5$ and $p=13$ as required.

%\textcolor{green} {Question. \\
%Most of the proof Proposition  \ref{Bernoulli} is seemingly devoted to the inequality $B_{2,\psi} >4$ for $p>13$.
%Here the explanation seems to tell that only rationality of $B_{2,\psi}$ is needed for the proof. Why did we bother to prove the inequality then? \\
%It looks like we have to either explain where the inequality is used or simplify (possibly eliminate all together) Proposition  \ref{Bernoulli}  if 
%we only need an almost trivial fact out of it.\\
%Please correct this!} 

%\textcolor{blue} {Under the assumption of $p>13$, we suppose the identity holds for all $n$. After solving the equations, we come up with $B_{2,\psi}=4,4/5$ or a non-real complex number, depending on what $\chi(3)$ is. We know that $B_{2,\psi}$ is definitely not a non-real complex number, but $B_{2,\psi}$ could be $4$ or $4/5$. We do need Proposition 1 that if $p>13$, $B_{2,\psi}>4$ to contradict that $B_{2,\psi}=4$ or $4/5$ for $p>13$.}

\end{proof}

%%%%%%%%%%%%%%%%%%%%%%%%%%%%%%%%%%%%%%%%%%%%%%%%%%
\section{Proof of Theorem \ref{limit1_}} 

In this section, we prove Theorem \ref{limit1_}. We start with part (a) which is simpler. This proof is  parallel to
the proof of Theorem 2 in \cite{Guerzhoy}. However, we present it here because that proof in \cite{Guerzhoy} has a typo.

\begin{proof}[Proof of Theorem \textup{ \ref{limit1_}(a) }]
We keep the notations from Introduction. 
Since $G_{1,\chi}G_{1,\overline{\chi}} \in M_2(p)$, we may always write
%\begin{equation}\label{Product}
\[
G_{1,\chi}G_{1,\overline{\chi}}=\alpha G_2 +f
\]
%\end{equation}
with $\alpha=\frac{|\delta_{\chi}(0)|^2}{\sigma'_p(0)}$, and a cusp form $f\in S_2(p)$. 
Let 
\[
f=\sum_{n > 0} a(n)q^n.
 \]
Equating the coefficients of $q^n$, we obtain for $n \geq 0$
\begin{equation} \label{Fcoe_}
\cF_{\chi}(n)=\alpha \sigma'_p(n) + a(n).
\end{equation}
We can see that $\sigma'_p(n)>n$ if $(p,n)=1$ from its definition. Now we need an upper bound for the Fourier coefficients of cusp forms. The Ramanujan-Petersson conjecture proved by Deligne implies that
\[
%|a(n)|<\textcolor{blue}{L}\sqrt{n}
|a(n)|<M\sqrt{n}
\]
with some constant $M$
%$\textcolor{blue}{L}$. 
Now, if we divide (\ref{Fcoe_}) by $\sigma'_p(n)$ and take the limit, the theorem follows.
\end{proof}

\begin{rem}
The special case  of Ramanujan-Petersson conjecture for weight $2$ which we make use of here 
was proved by Shimura \cite{Shimura2}.  Analogous proof of Theorem 2 in \cite{Guerzhoy} refers to
Hecke ("trivial") estimate for the coefficients of cusp forms, and this estimate actually does not suffice for the proof.
\end{rem}

The proof of  Theorem \ref{limit1_}(b) is a bit more subtle both because we will have to deal with two arithmetic functions, 
$\tilde{\sigma}_p$ and $\hat{\sigma}_p$, instead of just $\sigma'_p$, and because the functions themselves are slightly 
more complicated. We start with a proposition which relates these two functions.

\begin{prop} \label{sigmas_}
For a prime $p$ and a positive integer $n$ not divisible by $p$,
\[
\hat{\sigma}_p(n)=\left(\frac{p}{n}\right) \tilde{\sigma}_p(n).
\]
\end{prop}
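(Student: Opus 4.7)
The plan is to prove this by a direct manipulation of the divisor sum defining $\hat{\sigma}_p(n)$, converting it into the sum defining $\tilde{\sigma}_p(n)$ up to a global Kronecker-symbol factor. There should be no substantial obstacle here: this is essentially a bookkeeping identity that relies on two elementary facts about the Kronecker symbol, and the hypothesis $p \nmid n$ is exactly what is needed to make those facts apply.

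First I would rewrite
\[
\hat{\sigma}_p(n) = \sum_{0<d|n} \left(\frac{p}{d}\right) \frac{n}{d}
\]
by applying the substitution $d \mapsto n/d$, which is a bijection of the set of positive divisors of $n$ onto itself, obtaining
\[
\hat{\sigma}_p(n) = \sum_{0<d|n} \left(\frac{p}{n/d}\right) d.
\]
Next, I would use the complete multiplicativity of the Kronecker symbol in its lower argument to write
\[
\left(\frac{p}{n}\right) = \left(\frac{p}{d}\right)\left(\frac{p}{n/d}\right)
\]
for each positive divisor $d$ of $n$.

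Since $p \nmid n$ and $d \mid n$, we have $p \nmid d$, so $\left(\frac{p}{d}\right) \in \{\pm 1\}$ and in particular $\left(\frac{p}{d}\right)^2 = 1$. Multiplying both sides of the factorization by $\left(\frac{p}{d}\right)$ therefore yields
\[
\left(\frac{p}{n/d}\right) = \left(\frac{p}{n}\right)\left(\frac{p}{d}\right).
\]
Substituting this back and pulling the $n$-dependent factor outside the sum gives
\[
\hat{\sigma}_p(n) = \left(\frac{p}{n}\right) \sum_{0<d|n} \left(\frac{p}{d}\right) d = \left(\frac{p}{n}\right) \tilde{\sigma}_p(n),
\]
which is the required identity. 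The only point to take care with is that the Kronecker symbol $\left(\frac{p}{\cdot}\right)$ is completely multiplicative in the lower argument, so the above factorization is valid for all divisor pairs $(d, n/d)$ without any coprimality condition beyond $p \nmid n$.
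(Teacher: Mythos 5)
Your proof is correct. It is essentially the global version of the argument the paper gives locally: the paper first invokes multiplicativity of $\hat{\sigma}_p$, $\tilde{\sigma}_p$, and $\left(\frac{p}{\cdot}\right)$ to reduce to a prime power $n=l^r$ with $l\neq p$, and then performs exactly your reindexing inside that prime power (replacing the exponent $t$ by $r-t$ and using $\left(\frac{p}{l}\right)^{-u}=\left(\frac{p}{l}\right)^{u}$, i.e.\ that the nonzero values of the symbol are $\pm 1$). You instead apply the divisor involution $d\mapsto n/d$ to the whole sum at once and use complete multiplicativity of the Kronecker symbol in its lower argument, $\left(\frac{p}{n}\right)=\left(\frac{p}{d}\right)\left(\frac{p}{n/d}\right)$, together with $\left(\frac{p}{d}\right)^2=1$ (valid since $p\nmid n$ forces $p\nmid d$ for every $d\mid n$). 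This buys you a slightly shorter argument with no reduction step, at the price of leaning on complete multiplicativity of the symbol rather than just multiplicativity of the three arithmetic functions; both ingredients are standard, and your use of the hypothesis $p\nmid n$ is exactly where the paper uses $l\neq p$, so the proof is sound as written.
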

\begin{proof}
Since $\hat{\sigma}_p(n)$, $\tilde{\sigma}_p(n)$ and  $\left(\frac{p}{n}\right)$ are all multiplicative functions, we only need to consider the case when $n$ is a prime power. Let $n=l^r$ for some prime $l \neq p$ and some positive integer $r$. 
By definition, we have
\begin{align*}
\hat{\sigma}_p(n)=\hat{\sigma}_p(l^r) & = \sum_{t=0}^r \left(\frac{p}{l^t}\right) l^{r-t} =  \sum_{t=0}^r \left(\frac{p}{l}\right)^t l^{r-t} = 
 \sum_{u=0}^r \left(\frac{p}{l}\right)^{r-u} l^{u}  =  \left(\frac{p}{l}\right)^{r} \sum_{u=0}^r \left(\frac{p}{l}\right)^{-u} l^{u} \\
& =  \left(\frac{p}{l^r}\right) \sum_{u=0}^r \left(\frac{p}{l}\right)^{u} l^{u} =  \left(\frac{p}{n}\right) \tilde{\sigma}_p(n) 
\end{align*}
because for a positive integer $u$
\[
\left(\frac{p}{l^u}\right) = \left(\frac{p}{l}\right)^u \hspace{3mm} \text{and} \hspace{3mm} 
\left(\frac{p}{l}\right)^{-u} = \left(\frac{p}{l}\right)^u.
\]
\end{proof}

We thus have that $|\hat{\sigma}_p(n)|=|\tilde{\sigma}_p(n)|$, for $p \nmid n$. and we will use a lower bound for this quantity
given in the next proposition.

\begin{prop} \label{estimate}
For a positive integer $n$ not divisible by $p$,
\[
\abs{ \tilde{\sigma}_p(n)} \geq n (1/2)^{\omega(n)},
\]
where $\omega(n)$ is the number of distinct prime factors of $n$.
\end{prop}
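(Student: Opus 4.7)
The plan is to exploit multiplicativity: since $(p/\cdot)$ and the identity are both completely multiplicative, the function $d \mapsto (p/d)d$ is completely multiplicative, hence $\tilde{\sigma}_p(n) = \sum_{d\mid n}(p/d)d$ is multiplicative in $n$. So it suffices to prove the prime-power estimate
\[
|\tilde{\sigma}_p(\ell^r)| \geq \ell^r/2
\]
for every prime $\ell \neq p$ and every $r \geq 0$, and then multiply the estimates across the prime factorization $n = \prod \ell^{r(\ell)}$ to recover $|\tilde\sigma_p(n)| \geq n/2^{\omega(n)}$.

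The core step is therefore a direct computation with the geometric series at a prime power. Write $\epsilon = (p/\ell) \in \{\pm 1\}$, so
\[
\tilde{\sigma}_p(\ell^r) = \sum_{t=0}^{r} \epsilon^t \ell^t = \frac{1-(\epsilon \ell)^{r+1}}{1-\epsilon \ell}.
\]
For $\epsilon = 1$ this is $(\ell^{r+1}-1)/(\ell-1) \geq \ell^r$, which is stronger than needed. For $\epsilon = -1$ one splits by the parity of $r$: if $r$ is even the value $(1+\ell^{r+1})/(1+\ell)$ is positive and comparing with $\ell^r/2$ reduces to $\ell \geq 1$; if $r$ is odd the value $(1-\ell^{r+1})/(1+\ell)$ is negative, and $|\tilde\sigma_p(\ell^r)| \geq \ell^r/2$ reduces to $\ell^r(\ell-1) \geq 2$, which holds for any $\ell \geq 2$ and $r \geq 1$. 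The case $r=0$ is trivial since $\tilde{\sigma}_p(1) = 1$.

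I do not anticipate a real obstacle here — the potential worry is that the alternating-sign case $\epsilon = -1$ produces cancellation — but the elementary bound $\ell^r(\ell-1) \geq 2$ absorbs the worst such cancellation, and the factor $1/2^{\omega(n)}$ in the statement of the proposition is exactly what accommodates one such loss per distinct prime divisor. Once the prime-power bound is established, the general estimate is immediate from multiplicativity:
\[
|\tilde{\sigma}_p(n)| = \prod_{\ell^r \| n} |\tilde{\sigma}_p(\ell^r)| \geq \prod_{\ell^r \| n} \frac{\ell^r}{2} = \frac{n}{2^{\omega(n)}}.
\]
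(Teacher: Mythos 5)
Your proof is correct and follows essentially the same route as the paper: reduce to prime powers by multiplicativity, then bound the geometric series $\sum_{t=0}^r (p/\ell)^t \ell^t$. The only cosmetic difference is that you split into cases according to $(p/\ell)=\pm 1$ and the parity of $r$, whereas the paper handles all cases at once via the uniform bound $\abs{1-(p/\ell)^{r+1}\ell^{r+1}}/\abs{1-(p/\ell)\ell} \geq (\ell^{r+1}-1)/(\ell+1) \geq \ell^r/2$.
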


\begin{proof}
Since arithmetic function $\tilde{\sigma}_p(n)$ is multiplicative, it suffices to prove that, for a prime $l$ and a positive integer $r$,
\[
\abs{ \tilde{\sigma}_p(l^r)} \geq \frac{1}{2}l^r.
\]
Indeed, 

\[
\abs{ \tilde{\sigma}_p(l^r)} = \abs{ \sum_{u=0}^r \left(\frac{p}{l}\right)^{u} l^{u} } =
\abs{ \frac { 1- \left(\frac{p}{l}\right)^{u+1} l^{u+1} }{ 1- \left(\frac{p}{l}\right) l }} \geq \frac{l^{r+1}-1}{l+1}  \geq \frac{1}{2} l^r,
\]
where the latter inequality is equivalent to $l \geq 1 + 2l^{-r}$.
\end{proof}

We are now in a position to prove Theorem \ref{limit1_}(b).

\begin{proof}[Proof of Theorem \textup{ \ref{limit1_}(b) }]
Since $G^2_{1,\chi} \in M_2(p,\psi)$ with 
\[
\psi(n)=\chi^2(n) = \left(\frac{p}{n}\right),
\]
we can write 
\[
G^2_{1,\chi}=\alpha' \tilde{G}_{2,p} +\gamma \hat{G}_{2,p}+f
\]
with some $\gamma \in \C$,  
a cusp form $f \in S_2(p,\psi)$, and
\[
\alpha' = \frac{\delta_\chi(0)^2}{\tilde{\sigma}_p(0)}
\]
because neither $f$ nor $\hat{G}_{2,\psi}$ has a constant term in the Fourier expansion.

We equate the coefficients of $q^n$, divide both parts of the equation by $\tilde{\sigma}_p(n)$, and use Proposition \ref{sigmas_}
to obtain 

\[
\frac{\mathcal{H}_{\chi}(n)}{\tilde{\sigma}_p(n)}=\alpha' +  \left(\frac{p}{n}\right)\gamma + \frac{a(n)}{\tilde{\sigma}_p(n)}.
\]

In order to finish the proof, it now suffices to show that
\begin{equation} \label{lim}
\lim_{n \to \infty} \frac{a(n)}{\tilde{\sigma}_p(n)}=0.
\end{equation}

As in part (a) above, we still have that  
%$|a(n)|\leq \textcolor{blue}{L} \sqrt{n}$ for some constant $\textcolor{blue}{L}$,
$|a(n)|\leq M\sqrt{n}$ for some constant $M$,
and we make use of Proposition \ref{estimate} to get, for $n$ big enough,
\begin{equation*}
\left|\frac{a(n)}{\tilde{\sigma}_p(n)}\right| \leq \frac{M\sqrt{n}}{\left(\frac{1}{2}\right)^{\omega(n)}n}= \frac{M2^{\omega(n)}}{\sqrt{n}}.\\
\end{equation*}
We now make use of a bound proved in  \cite{Robin}
 \[
 \omega(n) <  13841 \frac{\ln n}{\ln (\ln n)}
 \] 
 to obtain that
\begin{align*}
\left|\frac{a(n)}{\tilde{\sigma}_p(n)}\right| < \frac{M \cdot 2^{13841 \frac{\ln n}{\ln (\ln n)}}}{\sqrt{n}}= \frac{M \cdot n^{\frac{13841\ln 2}{\ln \ln n}}}{\sqrt{n}} = M \cdot n^{\frac{13841 \ln 2}{\ln \ln n} - \frac{1}{2}} \rightarrow 0
\end{align*}
as $n \rightarrow \infty$. That implies (\ref{lim}) and concludes the proof.
\end{proof}

%%%%%%%%%%%%%%%%%%%%%%%%%%%%%%%%%%%%%%%%%%%%%%%%%%
\section{Proof of Theorem \ref{EvenChar_}} 

In this Section, we prove Theorem \ref{EvenChar_}.
We want to prove that, for an even character $\chi$, the identity (\ref{id1}) does not hold simultaneously already for $n=1$ and $n=p$.
Specifically, assuming (\ref{id1}) is true for $n=1$, we find that 
$\alpha = \delta_{\chi} (0)+\delta_{\overline{\chi}} (0)$, and
the identity for $n=p$ simplifies to
\[
\sum_{j=1}^{p-1} \delta_\chi(j) \delta_{\overline{\chi}}(p-j) = 0.
\]

Theorem \ref{EvenChar_} follows immediately from the following proposition.

\begin{prop} \label{even}
Let $p=2q+1$ and $q \equiv 1 \Mod 4$ be a prime. Let $\psi$ be an even Dirichlet character modulo $p$.
Then 
\begin{equation} \label{contra}
\sum_{j=1}^{p-1} \delta_\psi(j) \delta_{\overline{\psi}}(p-j) \neq 0.
\end{equation}
\end{prop}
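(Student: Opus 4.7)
The plan is a Galois-trace argument that reduces $S_\psi := \sum_{j=1}^{p-1} \delta_\psi(j) \delta_{\overline\psi}(p-j) \ne 0$ to an analytic estimate. If $\psi$ is the trivial even character modulo $p$, then $\delta_\psi(n) = d(n)$ (the classical divisor function) for $1 \le n \le p-1$, since every divisor of such $n$ is coprime to $p$; the sum becomes $\sum_{j=1}^{p-1} d(j) d(p-j)$, a sum of positive integers, hence nonzero. Otherwise $\psi$ is non-trivial; because $(\Z/p)^*/\{\pm 1\} \cong \Z/q$ and $q$ is prime, $\psi$ has exact order $q$ and $S_\psi \in \Z[\zeta_q]$. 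The $q-1$ non-trivial even characters $\psi^a$ ($a \in (\Z/q)^*$) form a single orbit under $\mathrm{Gal}(\Q(\zeta_q)/\Q)$ with $S_{\psi^a} = \sigma_a(S_\psi)$; hence $S_\psi=0$ would force every Galois conjugate to vanish, and in particular $\mathrm{Tr}_{\Q(\zeta_q)/\Q}(S_\psi) = \sum_{\psi' \ne 1\text{ even}} S_{\psi'} = 0$. It therefore suffices to show this trace is nonzero.

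Next I would compute the trace by character orthogonality. Expanding $\delta_\psi(j) \delta_{\overline\psi}(p-j) = \sum_{d\mid j,\,e\mid p-j} \psi(d)\overline\psi(e)$ and interchanging the sum over $\psi'$ with those over $(j,d,e)$, the key inner sum $\sum_{\psi'\ne 1\text{ even}} \psi'(de^{-1})$ equals $q-1$ if $d\equiv\pm e\Mod p$ and $-1$ otherwise, since the even characters of $(\Z/p)^*$ are precisely the $q$ characters of the quotient $(\Z/p)^*/\{\pm 1\}$. For $d,e\in[1,p-1]$ the condition $d\equiv\pm e\Mod p$ splits disjointly into $d=e$ or $d+e=p$ (disjoint for odd $p$). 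The first case, combined with $d\mid j$ and $d\mid p-j$, forces $d\mid\gcd(j,p-j)=1$, so $d=e=1$ and each $j$ contributes one triple, total $p-1$. In the second, using $\gcd(d,p-d)=1$, a short congruence calculation shows that for each $d\in[1,p-1]$ only $j=d$ satisfies both divisibility conditions, another $p-1$ triples. Assembling,
\[
\mathrm{Tr}(S_\psi)=q\cdot 2(p-1)-\Sigma=(p-1)^2-\Sigma,\qquad\Sigma:=\sum_{j=1}^{p-1}d(j)d(p-j).
\]

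The remaining, main task is to show $\Sigma<(p-1)^2$. Because $p$ is prime, $\gcd(j,p-j)=\gcd(j,p)=1$, so $d$ is multiplicative on $j(p-j)$ and $\Sigma=\sum_{j=1}^{p-1}d(j(p-j))$. The elementary bound $d(n)\le 2\sqrt n$ (divisors pair across $\sqrt n$) gives $\Sigma\le 2\sum_{j=1}^{p-1}\sqrt{j(p-j)}$. Since $\sqrt{x(p-x)}$ is concave on $[0,p]$ (the upper semicircle of diameter $p$) and vanishes at the endpoints, a standard trapezoidal Riemann-sum comparison yields $\sum_{j=1}^{p-1}\sqrt{j(p-j)}\le\int_0^p\sqrt{x(p-x)}\,dx=\pi p^2/8$. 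Hence $\Sigma\le\pi p^2/4$, and the strict inequality $\pi p^2/4<(p-1)^2$ reduces to $(1-\pi/4)p^2>2p-1$, which holds for every $p\ge 9$. Since the smallest prime of the form $2q+1$ with $q\equiv 1\Mod 4$ prime is $p=11$, the bound is valid throughout the hypothesis, giving $\mathrm{Tr}(S_\psi)>0$ and $S_\psi\ne 0$. This final analytic estimate is the main obstacle; the rest is essentially character orthogonality.
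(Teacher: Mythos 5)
Your proof is correct, but it follows a genuinely different route from the paper. You fix one nontrivial even character $\psi$, observe that $S_\psi=\sum_{j=1}^{p-1}\delta_\psi(j)\delta_{\overline\psi}(p-j)$ lies in $\Z[\zeta_q]$ and that (because $q$ is prime) its Galois conjugates run over all nontrivial even characters, and then show the rational trace is nonzero: orthogonality for characters of $(\Z/p\Z)^*/\{\pm1\}$ gives $\mathrm{Tr}(S_\psi)=2q(p-1)-\sum_{j=1}^{p-1}d(j)d(p-j)=(p-1)^2-\Sigma$, and your triple count is right (in the case $d+e=p$, writing $p-j=(p-d)n$ with $n\ge 2$ forces $1\le j\le 2d-p<d$, contradicting $d\mid j$, so indeed only $j=d$ occurs). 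The closing estimate $\Sigma=\sum_j d\bigl(j(p-j)\bigr)\le 2\sum_j\sqrt{j(p-j)}\le \pi p^2/4<(p-1)^2$ is valid for $p\ge 9$, hence for all $p\ge 11$ covered by the hypotheses (e.g.\ at $p=11$ one has $\Sigma=64<100$). The paper instead proves the stronger two-sided statement (its Proposition~7): it encodes all sums $S_{\chi^k}$ as values $f_\chi(\zeta^k)$ of a single integer polynomial at $(p-1)$-th roots of unity, uses the vanishing at odd characters to extract the factor $x^q+1$ after reduction modulo $x^{p-1}-1$, and rules out any further root by an exact computation of the coefficients $b_0,b_1,b_{p-1},b_p$; that computation is where $2$ being a primitive root modulo $p$, and hence the hypothesis $q\equiv 1\Mod 4$, is used. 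Your argument buys elementarity and generality --- it never needs $q\equiv 1\Mod 4$ or primitive roots, only that $q$ is prime (so the even characters form one Galois orbit) and $p=2q+1$ is prime, and it is quantitative, giving $\mathrm{Tr}(S_\psi)=(p-1)^2-\Sigma>0$ --- at the price of an analytic divisor-sum bound; the paper's argument is purely algebraic/exact and simultaneously characterizes exactly which characters give a vanishing sum.
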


The rest of this section is devoted to the proof of Proposition \ref{even}.

Our next proposition implies, in particular, that (\ref{id1}) is always true for $n=p$ if the character $\chi$ is odd.

\begin{prop}\label{ZeroSum_}

\textup{(a)}
Let $p$ be a prime and $\chi$ be a Dirichlet character modulo $p$. For any $1\leq j \leq p-1$, the expression 
$\delta_{\chi}(j)\delta_{\overline{\chi}}(p-j)$ is purely imaginary if $\chi$ is odd, and real if $\chi$ is even. 

\textup{(b)}
If the character $\chi$ is odd, then 
\[
\sum_{j=1}^{p-1} \delta_\chi(j) \delta_{\overline{\chi}}(p-j) = 0.
\]
\end{prop}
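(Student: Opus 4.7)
The plan is to derive part (a) from a multiplicativity identity for $\delta_\chi$ on integers coprime to $p$, and then deduce part (b) by combining part (a) with an elementary real-valuedness observation. The starting point is that for $1 \leq j \leq p-1$ every divisor $d$ of $j$ satisfies $1 \leq d \leq p-1$, so $\chi(d)$ is a unit and $\chi(j/d) = \chi(j)\chi(d)^{-1}$. Substituting $d \mapsto j/d$ in $\delta_\chi(j) = \sum_{d \mid j} \chi(d)$ therefore yields the identity
\[
\delta_\chi(j) \;=\; \chi(j) \sum_{d \mid j} \chi(d)^{-1} \;=\; \chi(j)\,\delta_{\overline{\chi}}(j),
\]
and of course the analogous identity with $p-j$ in place of $j$, since $1 \leq p-j \leq p-1$ as well.

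Next I would apply this identity to both $j$ and $p-j$, multiply them, and use $\chi(p-j) = \chi(-1)\chi(j)$ to obtain
\[
\delta_\chi(j)\,\delta_{\overline{\chi}}(p-j) \;=\; \chi(-1)\,\delta_{\overline{\chi}}(j)\,\delta_\chi(p-j) \;=\; \chi(-1)\,\overline{\delta_\chi(j)\,\delta_{\overline{\chi}}(p-j)},
\]
the last equality coming from $\overline{\delta_\chi(n)} = \delta_{\overline{\chi}}(n)$. Setting $z = \delta_\chi(j)\,\delta_{\overline{\chi}}(p-j)$, this reads $z = \chi(-1)\,\overline{z}$, so $z$ is purely imaginary when $\chi(-1) = -1$ and real when $\chi(-1) = 1$, which is exactly part (a).

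For part (b) I would separately observe that the full sum $S = \sum_{j=1}^{p-1} \delta_\chi(j)\,\delta_{\overline{\chi}}(p-j)$ is automatically real: the substitution $j \leftrightarrow p-j$ combined with $\overline{\delta_\chi(n)} = \delta_{\overline{\chi}}(n)$ shows $\overline{S} = S$. When $\chi$ is odd, part (a) makes every summand purely imaginary, so $S$ is also purely imaginary; being simultaneously real and purely imaginary, it must vanish.

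The main obstacle, and essentially the only nontrivial step, is spotting the identity $\delta_\chi(j) = \chi(j)\,\delta_{\overline{\chi}}(j)$; once it is available, both parts drop out in a couple of lines. The restriction $1 \leq j \leq p-1$ is crucial for this identity, since it guarantees that every divisor of $j$ is coprime to $p$ and hence lies in the group on which $\chi$ is invertible. No further subtlety is required, and in particular no input from $L$-functions, Gauss sums, or modular-forms machinery is needed to settle the proposition.
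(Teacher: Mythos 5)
Your proof is correct and takes essentially the same route as the paper: the paper's argument is the same divisor-reversal $d \mapsto j/d$ applied inside the double sum (pairing each term $\chi(d)\overline{\chi}(d')$ with $\chi(j/d)\overline{\chi}((p-j)/d')$), which is exactly your identity $\delta_\chi(j)=\chi(j)\,\delta_{\overline{\chi}}(j)$ unpacked termwise, and part (b) is obtained in both cases from the summands being purely imaginary while the full sum equals its own conjugate. Your packaging through that identity is just a slightly tidier presentation of the same computation.
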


\begin{proof}
The expression in question can be rewritten as a sum
\[
\delta_{\chi}(j)\delta_{\overline{\chi}}(p-j) = \sum_{d|j} \sum_{d'|p-j} \chi(d) \overline{\chi}(d') = 
\frac{1}{2}\sum_{d|j, \hspace{1mm} d'|(p-j)} \left( \chi(d) \overline{\chi}(d') + \chi(j/d) \overline{\chi}((p-j)/d') \right).
\]
Making use of $\chi(j)\overline{\chi}(-j) = \chi(-1)$ we transform every summand
%\[
% \chi(d) \bar{\chi}(d') + \chi(j/d) \bar{\chi}((p-j)/d')  = \chi(d) \bar{\chi}(d') + \chi(j)\bar{\chi}(d)\bar{\chi}(-j)\chi(d') =
% \chi(d) \bar{\chi}(d') + \chi(-1) \overline{ \chi(d) \bar{\chi}(d') },
%\]
\begin{align*}
  \chi(d) \overline{\chi}(d') + \chi(j/d) \overline{\chi}((p-j)/d')  & = \chi(d) \overline{\chi}(d') + \chi(j)\overline{\chi}(d)\overline{\chi}(-j)\chi(d') \\
       &=  \chi(d) \overline{\chi}(d') + \chi(-1) \overline{ \chi(d) \overline{\chi}(d') },
\end{align*}
and  assertion (a) becomes clear term-by term. Assertion (b) follows from that since the sum is real (because it is equal to its conjugate).
\end{proof}

%Thus from now on we assume that our prime $p=2q+1$ and $q \equiv 1 \mod 4$ is a prime. 
 
%We start with an easy statements which are possibly well-known. For convenience of the reader,
%we supply these statements with proofs.

%\begin{prop}
%For a positive integer $n$, let $\zeta=\exp(\pi i/n)$. 
%Then every even power of $\zeta$ is root of $x^n-1$ while every odd power of $\zeta$ is a root 
%of $x^n+1$.
%\end{prop}

We now begin to exploit some specifics of our assumptions about the prime $p$.

\begin{prop} \label{2prim}
Let $p=2q+1$ and $q \equiv 1 \Mod 4$ be a prime. Then $2$ is a primitive root modulo $p$
(i.e. a generator of $(\Z/p\Z)^*$).
\end{prop}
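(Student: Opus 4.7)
The plan is to exploit the very restrictive structure of the group $(\Z/p\Z)^*$ coming from the assumption that $q$ is prime. First I would observe that $|(\Z/p\Z)^*| = p - 1 = 2q$, and since $q$ is prime, the only possible orders of an element are $1, 2, q,$ and $2q$. So it suffices to rule out the first three possibilities for the order of $2$.

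The cases of order $1$ and $2$ are immediate: $2 \not\equiv 1 \pmod p$ since $p > 2$, and $2^2 = 4 \not\equiv 1 \pmod p$ since $p = 2q + 1 \geq 11 > 3$. The key step is ruling out order $q$. Order $q$ would mean that $2$ lies in the unique index-two subgroup of $(\Z/p\Z)^*$, equivalently that $2$ is a quadratic residue modulo $p$, i.e.\ $\left( \frac{2}{p} \right) = 1$.

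At this point I would invoke the supplementary law of quadratic reciprocity, which says $\left( \frac{2}{p}\right) = 1$ iff $p \equiv \pm 1 \pmod 8$. The assumption $q \equiv 1 \pmod 4$ means $q = 4k+1$ for some integer $k$, so $p = 2q+1 = 8k + 3 \equiv 3 \pmod 8$. Hence $\left( \frac{2}{p}\right) = -1$, which means $2$ is a quadratic non-residue and so cannot have order $q$. The only remaining possibility is that $2$ has order $2q = p-1$, i.e.\ $2$ generates $(\Z/p\Z)^*$.

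There is no real obstacle here; the congruence $p \equiv 3 \pmod 8$ coming from $q \equiv 1 \pmod 4$ is the whole point, and the primality of $q$ is what keeps the divisor lattice of $p-1$ small enough that a single non-residue computation suffices. I would simply write the proof as the three-line elimination above, citing the second supplementary law of quadratic reciprocity.
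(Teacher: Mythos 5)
Your proof is correct, and it takes a slightly different (arguably more streamlined) route than the paper. Both arguments rest on the same two pillars: the primality of $q$, which makes the divisor structure of $p-1=2q$ trivial, and the second supplementary law giving $\left(\frac{2}{p}\right)=-1$ because $q\equiv 1\Mod 4$ forces $p\equiv 3\Mod 8$. The difference is in the packaging. You eliminate the possible orders $1,2,q$ of $2$ directly by Lagrange: orders $1$ and $2$ fail since $p\geq 11$ (so $p\nmid 1$ and $p\nmid 3$), and order dividing $q$ would place $2$ in the unique subgroup of order $q$, which is exactly the subgroup of squares, contradicting non-residuosity. The paper instead counts: there are $q$ non-residues and $\varphi(2q)=q-1$ primitive roots, every primitive root is a non-residue, so exactly one non-residue fails to be a primitive root; that exception is $-1$ (a non-residue because $p\equiv 3\Mod 4$), and $2\neq -1$ is therefore a primitive root. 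Your version avoids the $\varphi$-count and the identification of the exceptional non-residue $-1$, at the cost of the small explicit checks $2\not\equiv 1$ and $4\not\equiv 1\Mod p$; the paper's version gives the marginally stronger structural statement that \emph{every} non-residue other than $-1$ is a primitive root. Either argument proves the proposition.
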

\begin{proof}
The subgroup of squares has index $2$ in $(\Z/p\Z)^*$, thus there are exactly $(p-1)/2=q$ non-squares modulo $p$.
At the same time, there are exactly
\[
\varphi(\varphi(p)) = \varphi(p-1) = \varphi(p-1) = \varphi(2q) = \varphi(2) \varphi(q) = q-1
\]
primitive roots modulo $p$. Since no square can be a primitive root, all but one non-squares must be primitive roots.
The non-square which is not a primitive root is $-1$ (since $p \equiv 3 \Mod 4$, the residue $-1$ is indeed a non-square
modulo $p$).
By quadratic reciprocity, $2$ is a quadratic non-residue 
modulo $p$, and since it is different from $-1$, it must be a primitive root modulo $p$.
\end{proof}

%We are now in a position to prove Proposition \ref{even}. In particular, 
Since now on we assume that
our prime  $p=2q+1$, where $q \equiv 1 \Mod 4$ is a prime, and  let $\zeta = \exp( 2 \pi i/(p-1))$.
The group of Dirichlet characters modulo $p$ is cyclic of order $p-1$ generated by the (odd) character $\chi$ defined by
\[
\chi(2) = \zeta.
\]

%\begin{proof}[Proof of Proposition \ref{even}]
We now construct certain polynomials associated with a Dirichlet character $\xi$ modulo $p$.
For an arbitrary  Dirichlet character $\xi$ modulo $p$, define integers 
\[
0 \leq t(\xi,d) \leq p-2
\]
by 
\[
\xi(d) = \zeta^{t(\xi,d) }.
\]
Note that, for any positive integers $k$ and $d$, 
\begin{equation} \label{cong_t}
t(\xi^k,d) \equiv kt(\xi,d) \mod (p-1)
\end{equation}
Let, for a positive integer $j\leq p-1$,
\[
h_{\xi,j}(x) :=\sum_{d | j} x^{t(\xi,d)} 
\]
be a polynomial (in $x$) of degree at most $p-2$, and let
%\[
%f_{\xi}(x) := \sum_{j=1}^{p-1} h_{\xi,j} (x)h_{\bar{\xi},p-j}(x) =b_{2p-4}x^{2p-4}+b_{2p-3}x^{2p-3}+\cdots+b_px^p+b_{p-1}x^{p-1}+b_{p-2}x^{p-2}+\cdots+b_1x+b_0
%\]
\begin{align*}
  f_{\xi}(x) &:= \sum_{j=1}^{p-1} h_{\xi,j} (x)h_{\bar{\xi},p-j}(x)  \\
       &= b_{2p-4}x^{2p-4}+b_{2p-3}x^{2p-3}+\ldots+b_px^p+b_{p-1}x^{p-1}+b_{p-2}x^{p-2}+\ldots+b_1x+b_0
\end{align*}
be a polynomial of degree at most $2p-4$.

%\textcolor{blue}{
The polynomials just introduced allow us to reformulate Proposition \ref{even}. Clearly,
\[
h_{\xi,j}(\zeta) = \delta_\xi(j).
\]
For a character $\xi=\chi^k$ with a positive integer $k$, we obtain making use of (\ref{cong_t})
\[
h_{\chi^k,j}(\zeta)=\delta_{\chi^k}(j)=\sum_{d |j} \chi^k (d) = \sum_{d |j} (\chi (d))^k = \sum_{d |j} (\zeta ^{t(\chi,d)})^k 
= \sum_{d |j} (\zeta ^k)^{t(\chi,d)} = h_{\chi,j}(\zeta^k).
\]
%}
It follows  that 
\[
\sum_{j=1}^{p-1} \delta_{\chi^k}(j) \delta_{\overline{\chi^k}}(p-j) 
=f_{\chi^k}(\zeta) = f_{\chi}(\zeta^k).
\]
Since every Dirichlet character $\xi$ modulo $p$ can be written as $\xi = \chi^k$, where the parity of $\xi$ coincides with the parity of $k$, we deduce from Proposition \ref{ZeroSum_} that $f_{\chi}(\zeta^k)=0$ if $k$ is odd, and our target Proposition \ref{even} can be reformulated as follows.

\begin{prop} \label{even'}
Let $\zeta=\exp(2\pi i/(p-1))$. 
Let $\chi$ be the character modulo $p$ defined by its value on the primitive root $\chi(2)=\zeta$. 
Let $f_{\chi}(x)$ be the polynomial associated with $\chi$ as above.
Then, for a positive integer $k$, we have that  $f_{\chi}(\zeta^k)=0$ (i.e. the quantity $\zeta^k$ is a root of the polynomial $f_{\chi}(x)$) if and only if $k$ is odd.
%The quantity $\zeta^k=\exp(2k\pi i/(p-1))$ with a positive  integer $k$ is a root of 
%the polynomial $f_{\chi}(x) $ associated as above to the character $\chi$ modulo $p$ defined by $\chi(2)=\zeta=\exp(2\pi i/(p-1))$
%if an only if $k$ is odd.
\end{prop}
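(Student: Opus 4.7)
The ``if'' direction follows from Proposition \ref{ZeroSum_}(b): for odd $k$, the character $\chi^k$ is odd, and the identity $f_\chi(\zeta^k) = \sum_{j=1}^{p-1} \delta_{\chi^k}(j) \delta_{\overline{\chi^k}}(p-j)$ established above yields $f_\chi(\zeta^k) = 0$.

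For the ``only if'' direction, we must show $f_\chi(\zeta^k) \neq 0$ for every positive even integer $k$. Writing $k = 2m$, we have $\zeta^k = \omega^m$ where $\omega := \zeta^2$ has order $q$. When $q \mid m$, we have $\zeta^k = 1$ and $f_\chi(1) = \sum_{j=1}^{p-1} \tau(j)\tau(p-j) > 0$ as a strictly positive sum of positive integers. Otherwise $\omega^m$ is a primitive $q$-th root of unity (since $q$ is prime), and it suffices to show $f_\chi(\omega) \neq 0$ for any one such root.

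Since $f_\chi$ has non-negative integer coefficients, and since the Galois group of $\Q(\omega)/\Q$ permutes the primitive $q$-th roots of unity transitively, $f_\chi$ vanishes at one such root if and only if it vanishes at all of them. Setting $F(x) := f_\chi(x) \bmod (x^q - 1)$, this is equivalent to asking whether $\Phi_q(x) \mid F(x)$; since $\deg F \le q - 1 = \deg \Phi_q$, the divisibility forces $F = C \cdot \Phi_q$ for some constant $C$, i.e., all coefficients of $F$ are equal. The plan is therefore to compute two coefficients of $F$ and verify that they differ.

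Writing $F(x) = \sum_{m=0}^{q-1} c_m x^m$ and $T(d) := t(\chi,d) \bmod q$, the coefficient $c_m$ counts quadruples $(d,u,d',v) \in \Z_{\ge 1}^4$ with $du + d'v = p$ satisfying $T(d) - T(d') \equiv m \Mod q$. Because $\chi^2$ has order $q$ with kernel $\{\pm 1\} \subset (\Z/p\Z)^\times$ (using Proposition \ref{2prim} and the consequent $2^q \equiv -1 \Mod p$), this congruence is equivalent to $d \equiv \pm 2^m d' \Mod p$. For $m = 0$ the pair $(d,d')$ either equals $(1,1)$ (contributing $p-1$ quadruples, one per $j \in \{1,\ldots,p-1\}$) or satisfies $d + d' = p$ (where primality of $p$ forces $u = v = 1$, contributing another $p-1$), giving $c_0 = 2(p-1)$. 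For $m = 1$, the condition $d \equiv \pm 2 d' \Mod p$ splits into four sub-cases according to the residue ranges of $d, d'$; in each one the equation $du + d'v = p$ combined with $\gcd(d',p) = 1$ and the primality of $p$ pins the solutions down explicitly, and a direct enumeration gives $c_1 = p + 1$. Since $c_0 - c_1 = p - 3 > 0$ for every admissible prime (the smallest being $p = 11$), the coefficients of $F$ are not all equal, completing the proof. I expect the main technical obstacle to be the four-case enumeration for $c_1$, though each sub-case reduces to an elementary count of positive integer solutions to the linear Diophantine equation $du + d'v = p$.
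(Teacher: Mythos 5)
Your proposal is correct, and its core computation is genuinely different from the paper's. Both arguments share the same endgame: restrict attention to $q$-th roots of unity, note that a polynomial of degree at most $q-1$ divisible by $\Phi_q$ must be a constant multiple of $\Phi_q$, hence have all coefficients equal, and then exhibit two unequal coefficients. But the inputs differ. The paper reduces $f_\chi$ modulo $x^{p-1}-1$, uses the already-established vanishing at the odd powers of $\zeta$ (Proposition \ref{ZeroSum_}) to factor out $x^q+1$, and then only needs the four individual coefficients $b_0=p-1$, $b_1=\frac{p-1}{2}+1$, $b_{p-1}=b_p=0$ supplied by Proposition \ref{coeff}, since the relevant coefficients of the quotient are $b_{p-1}+b_0$ and $b_p+b_1$. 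You instead reduce modulo $x^q-1$ directly, so the odd-power vanishing plays no role in your ``only if'' half; the price is that you must compute the \emph{folded} coefficient sums $c_m=\sum_{i\equiv m\ (\mathrm{mod}\ q)}b_i$, which you do by recasting $c_m$ as a count of quadruples $(d,u,d',v)$ with $du+d'v=p$ and $d\equiv\pm 2^m d'\Mod p$ (the translation via $\ker\chi^2=\{\pm1\}$ and $2^q\equiv-1\Mod p$ is correct). You only sketch the four-case enumeration for $c_1$, but it does close: the cases $d=2d'$, $d=2d'-p$, $d+2d'=p$, $d+2d'=2p$ contribute $\frac{p-1}{2}$, $1$, $\frac{p-1}{2}+1$, $0$ respectively, giving $c_1=p+1$, while $c_0=2(p-1)$ as you say (the case $d=d'$ forces $d=1$, and $d+d'=p$ forces $u=v=1$); so $c_0\neq c_1$ and the argument is complete. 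What each route buys: the paper's factorization through $x^q+1$ keeps the coefficient computations minimal (only $b_0,b_1,b_{p-1},b_p$, obtained from a few character values), whereas your route replaces Proposition \ref{coeff} by a self-contained Diophantine count and, as a bonus, treats the root $\zeta^k=1$ explicitly via $f_\chi(1)=\sum_j\tau(j)\tau(p-j)>0$, a case the paper's ``divisible by $\Phi_q$'' step passes over silently.
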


Our ultimate goal now is to prove Proposition \ref{even'}.  We need some specific information about the coefficients of $f_{\chi}(x) $
given in the following proposition.
\begin{prop} \label{coeff}
The polynomial $f_{\chi}(x) $ has positive integer coefficients. Furthermore,
\[
b_0=p-1, \hspace{5mm} b_1=\frac{p-1}{2}+1, \hspace{5mm} b_p=b_{p-1}=0.
\]
\end{prop}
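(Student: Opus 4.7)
My plan is to interpret each coefficient $b_m$ of $f_\chi(x)$ as the cardinality of a set of triples. Expanding the product $h_{\chi,j}(x)h_{\bar\chi,p-j}(x)$ and summing over $j$, the coefficient of $x^m$ is
\[
b_m = \#\bigl\{(d,d',j) : 1\le j\le p-1,\ d\mid j,\ d'\mid(p-j),\ t(\chi,d)+t(\bar\chi,d')=m\bigr\},
\]
where the last condition is an equation of ordinary integers (both exponents lie in $\{0,\dots,p-2\}$, so the sum lies in $\{0,\dots,2p-4\}$ and no reduction modulo $p-1$ occurs). This interpretation alone shows every $b_m$ is a non-negative integer.

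The values of $b_0$ and $b_1$ then follow from a short enumeration of admissible pairs $(d,d')$. For $b_0$, since $\chi$ is a primitive character of order $p-1$, the equation $\chi(d)=1$ forces $d=1$ in the range $\{1,\dots,p-1\}$, so the only admissible pair is $(1,1)$ and every $j$ qualifies, giving $b_0=p-1$. For $b_1$, one splits into $(t(\chi,d),t(\bar\chi,d'))\in\{(0,1),(1,0)\}$. Using $\chi(2)=\zeta$, these cases pin down the pairs $(1,(p+1)/2)$ and $(2,1)$ respectively. In the first, the inequality $(p+1)/2>(p-1)/2$ forces the single choice $p-j=(p+1)/2$; in the second, every even $j$ works. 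Adding yields $b_1=1+(p-1)/2$.

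For $b_{p-1}$ and $b_p$ the plan is to use $\chi(d)\bar\chi(d') = \zeta^{(t(\chi,d)+t(\bar\chi,d'))\bmod(p-1)}$. Since $p-1\equiv 0$ and $p\equiv 1\pmod{p-1}$, the target sums translate into $d\equiv d'\pmod p$ and $d\equiv 2d'\pmod p$ respectively. For $b_{p-1}$ this forces $d=d'\ne 1$ in range (the case $d=1$ gives sum $0$), whence $d\mid j$ and $d\mid(p-j)$ imply $d\mid p$, impossible. For $b_p$ one first observes that $d=1$ or $d'=1$ cap the $t$-sum at $p-2$, so $d,d'\ge 2$. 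Then either $d=2d'\in\{4,\dots,p-1\}$, in which case $d'\mid j$ together with $d'\mid(p-j)$ forces $d'\mid p$, a contradiction; or $d=2d'-p$ with $d'\ge(p+1)/2$, where the size of $d'$ reduces $d'\mid(p-j)$ to the single possibility $j=p-d'$, after which the remaining condition $d\mid(p-d)/2$ rearranges to $p=d(2m+1)$, forcing $d\in\{1,p\}$, again a contradiction.

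The arithmetic for $b_0$, $b_1$, and $b_{p-1}$ amounts to routine enumeration once the counting interpretation is set up. The main obstacle is $b_p=0$: the congruence $d\equiv 2d'\pmod p$ genuinely branches into two sub-cases of different shape, and eliminating both requires coupling the divisibility conditions tightly with the range of residues and with the primality of $p$.
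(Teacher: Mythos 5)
Your proposal is correct and follows essentially the same route as the paper: the same counting interpretation of the $b_m$, the same identification of the divisors $2$ and $(p+1)/2$ for $b_0,b_1$, and the same congruences $d\equiv d'$ and $d\equiv 2d'\Mod p$ with the two sub-cases $d=2d'$ and $d=2d'-p$ for $b_{p-1},b_p$. The only (cosmetic) difference is at the very end of the $b_p$ case, where you conclude from $p=d(2m+1)$ by primality of $p$, while the paper pins down $j=(p-1)/2$, $y=(p+1)/2$ and contradicts $y>(p+1)/2$; both are the same computation.
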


We postpone the proof of Proposition \ref{coeff} and prove Proposition \ref{even'} (therefore Proposition \ref{even}, therefore
Theorem \ref{EvenChar_}) now.

\begin{proof}[Proof of Proposition \ref{even'} ]
Let 
\[
g(x)=b_{p-2}x^{p-2}+(b_{2p-4}+b_{p-3})x^{p-3}+\cdots+(b_p+b_1)x+(b_{p-1}+b_0).
\]
%Then $f_{\chi}(x) - g(x) = h(x)(x^{p-1}-1)$, and therefore $\zeta^k$ is a root of $f_{\chi}(x)$ if and only if it is a root of $g(x)$.
Then the polynomial $(x^{p-1}-1)$ divides the polynomial $f_{\chi}(x) - g(x)$, and 
therefore $\zeta^k$ is a root of $f_{\chi}(x)$ if and only if it is a root of $g(x)$.
In the factorization
\[
x^{p-1}-1= x^{2q}-1 = (x^q-1)(x^q+1),
\]
$\zeta^k$ with odd $k$ are exactly the roots of the second factor (while those with even $k$ are exactly the roots of the first factor). 
Since  $f_{\chi}(\zeta^k)=0$ if $k$ is odd, we have that
\[
g(x)=(x^q+1)P(x)
\]
with a polynomial $P(x)$ with integer coefficients of degree at most $q-1$. If an even power of $\zeta$ was a root of $g(x)$, it would be a root of $P(x)$ thus $P(x)$ would  be divisible by the cyclotomic polynomial $\Phi_q(x)=x^{q-1}+x^{q-2}+ \cdots + x^2 +x+1$
because $q$ is a prime. Since, however, $\deg P(x) \leq \deg \Phi(x)=q-1$, the two polynomials would differ by a constant factor.
If this was the case then, for $P(x)=a_{q-1}x^{q-1} + \ldots + a_1x+a_0$, we would have $a_1=a_0$, which translates immediately to
\[
b_p+b_1=b_{p-1}+b_0
\]
for the coefficients of  $f_{\chi}(x) $ in contradiction with Proposition \ref{coeff}.

\end{proof}

We are left only to prove Proposition \ref{coeff}.

\begin{proof}[Proof of Proposition \ref{coeff}]

We start with recording some values of the character $\chi$.
Since 
\[
\chi(2)=\zeta,
\]
\[
\zeta\chi(q)=\chi(2q) = \chi(p-1) = -1 = \zeta^q, \hspace{3mm} \text{implies} \hspace{3mm} \chi(q)=\zeta^{q-1};
\]
\[
\chi(2)\chi(q+1) = \chi(2q+2) = \chi(p+1) = 1 \hspace{3mm} \text{implies} \hspace{3mm} \chi(q+1)=\zeta^{p-2}.
\]
We thus have the following values of $t(\chi,d)$:
\begin{center}
\begin{tabular}{l | r}
$d$ & $t(\chi,d)$ \\
\hline
$1$ & $0$ \\
$2$ & $1$ \\
$q$ & $q-1$ \\
$q+1$ & $p-2$ \\
\end{tabular}
\end{center}

By definition,
\[
f_{\chi}(x) = \sum_{j=1}^{p-1} h_{\chi,j} (x)h_{\overline{\chi},p-j}(x) 
\]
and since both $h_{\chi,j} (x)$ and $h_{\overline{\chi},p-j}(x)$ have constant terms $1$, the constant term of $f_{\chi}(x) $ is
\[
b_0=p-1.
\]
In order to calculate $b_1$, note that  $h_{\chi,j} (x)$  has an $x$-term (with coefficient $1$) every time when $2 | j$ while
$h_{\overline{\chi},p-j}(x)$ has an  $x$-term (with coefficient $1$) only when $j=q$ (otherwise $(q+1) \nmid p-j = 2q+1-j$).
We thus have all together
\[
b_1=\frac{p-1}{2}+1.
\]
Note that for every $d$ such that $ 1 < d \leq p-1$, there exists exactly one solution $u$ such that 
 $ 1 < u \leq p-1$ and $t(\chi,d)+t(\overline{\chi},u) = p-1$, and that is $u=d$ since $\chi(d) \overline{\chi}(d) = 1$.
 It follows that 
 \[
 b_{p-1}=0
 \]
 because no $d>1$ can divide simultaneously $j$ and $p-j$.
 
We now claim that $b_p=0$. Otherwise we would have that
\[
t(\chi,d)+t(\overline{\chi},y) = p.
\]
That implies
\[
t(\chi,y) = t(\chi,d)-1,
\]
and therefore
\[
d \equiv 2y \Mod p.
\]
Since both $2 \leq d,y \leq p-2$,
either $d=2y$ or $d=2y-p$ with $y > (p+1)/2$.

However, $d=2y$ is not possible since $2y | j$ and $y | (p-j)$ at a time would imply $y |p$.

We are thus left with the only option that $y | (p-j)$ and $(2y-p) | j$ while $y > (p+1)/2$ which implies $y=p-j$.
Then $2y-p=p-2j$ and we can write
\[
(p-2j)t = j
\]
with some positive integer $t$.
We find that
\[
j=\frac{pt}{1+2t},
\]
and conclude that $p=2t+1$ because $(t,2t+1)=1$.
Thus
\[
(p-2j)\frac{p-1}{2} = j,
\]
and that implies
\[
j=\frac{p-1}{2} \hspace{3mm} \text{therefore} \hspace{3mm} y=p-j=\frac{p+1}{2}
\] 
in contradiction with $y>(p+1)/2$ above.
\end{proof}
%\newpage

\end{document}